\newtheorem{thm}{Theorem}[section]
\newtheorem*{theorem*}{Theorem}
\newtheorem{cor}{Corollary}[section]
\newtheorem{lem}{Lemma}[section]
\newtheorem{prop}{Proposition}[section]
\theoremstyle{definition}
\newtheorem{rem}{Remark}[section]
\newcommand{\C}{\mathbb C}
\newcommand{\R}{\mathbb R}
\newcommand{\Z}{\mathbb Z}
\newcommand{\calL}{\mathcal L}
\newcommand{\diff}{{\rm Diff}}
\newcommand{\ism}{{\rm Isom}}
\newcommand{\bm}{\overline M}
\newcommand{\bmp}{{\overline M}_p}
\newcommand{\kk}{2k-1}
\newcommand{\itm}{[0,1]\times M}
\newcommand{\itbm}{[0,1]\times \bm}
\newcommand{\ints}{\int_{S^1}}
\newcommand{\PP}{\mathbb P}
\newcommand{\sgn}{{\rm sgn}}
\begin{document}

\title[Geometry of Loop Spaces II: Corrections]{The geometry of loop spaces II: Corrections} 
\author[Y. Maeda]{Yoshiaki Maeda}
\address{Tohoku Forum for Creativity, Tohoku University}
\email{yoshimaeda@m.tohoku.ac.jp}
\author[S. Rosenberg]{Steven Rosenberg}
\address{Department of Mathematics and Statistics, Boston University}
\email{sr@math.bu.edu}
\author[F. Torres-Ardila]{Fabi\'an Torres-Ardila}
\address{Mauricio Gast\'on Institute for Latino Community Development and Public Policy,
University of Massachusetts Boston}
\email{Fabian.Torres-Ardila@umb.edu}

\maketitle

\section{Introduction}

This article contains corrections and extensions of results in \cite{MRT4}.  The main correction is that theorems stating $|\pi_1(\diff(\bmp))| = \infty$ must be replaced with $|\pi_1(\ism(\bmp))| = \infty$, where $\diff(\bmp)$, $\ism(\bmp)$ refer to the identity component of the diffeomorphism groups and isometry groups of certain $5$-manifolds $\bmp.$ In particular, this applies to Cor.~3.9, Thm.~3.10, Cor.~3.11, Ex.~3.12, Ex.~3.13, Ex.~3.15, 
Prop.~ 3.16, Ex.~3.17, Prop.~ 3.18, Prop.~3.20 in \cite{MRT4}.  As a new result, we find an infinite family of pairwise non-isometric metrics $h_p, p\in \Z^+,$ on $S^5$ such that 
$|\pi_1(\ism(S^5, h_p))| = \infty.$

In the proof of \cite[Prop.~3.4]{MRT4}, we implicitly used that  the Wodzicki-Chern-Simons $CS^W_{2k-1}\in \Lambda^{2k-1}(L\bm)$ is closed for a  $(2k-1)$-dimensional closed manifold $\bm$, in the notation of \cite{MRT4}.
This is not correct in general. In fact, we can only prove that $F^{L,*}CS^W_{2k-1}$ is closed as a $(2k-1)$-form on $[0,1]\times \bm$.  Here $F:[0,1]\times S^1\times \bm\to \bm$ and we assume
$F^L:[0,1]\times \bm\to L\bm$, $ F^L(x^0,m)(\theta) = F(x^0,\theta, m)$, is an isometry of $\bm$ for all $(x^0,\theta).$  This is proved in Prop.~\ref{prop:2.1}.  In Prop.~\ref{prop:two}, we then recover \cite[Prop.~3.4]{MRT4} for isometries.  
In Cor.~\ref{cor:one}, we restate the main results in \cite{MRT4} for isometry groups.  In Prop.~\ref{prop:new}, we construct the metrics $h_p$ as above.

In \S3, we give other corrections to \cite{MRT4}.  Most are minor, but in Lem.~\ref{lem:3.1} we note that   $C^k$-mapping spaces are homotopy equivalent for different $k$, while the homotopy equivalence for different Sobolev topologies  is unclear.  Thus we should have used $C^k$ topologies in \cite{MRT4}.

Even though this paper now concentrates on isometry groups, we should also add references to recent work on diffeomorphism groups omitted in the Introduction in \cite{MRT4}: 
    Bamler-Kleiner \cite{BK}, Baraglia \cite{Bar}, 
   Iida-Konno-Mukherjee-Taniguchi \cite{IKMT}, 
   Kronheimer-Mrowka \cite{KrM}, Kupers-Randal-Williams \cite{KRW}, Lin-Mukherjee \cite{LM}, Konno-Tanaguchi \cite{KT}, and Watanabe \cite{W}. This list is not exhaustive.

\section{Replacing diffeomorphism groups with isometry groups}
In \S2.1, we state the main results and prove  Prop.~\ref{prop:new}.  In \S2.2, we prove the usual naturality property $d_Mf^*\omega = f^* d_N\omega$ for maps $f:M\to N$ of Banach manifolds and $\omega \in \Lambda^*(N).$
In \S2.3, we recall the local coordinate expression for $CS^W.$ In \S2.4, we obtain the local coordinate expression for $d_{\itbm} F^{L^*} CS^W = F^{L^*} d_{L\bm} CS^W$ (Prop.~\ref{prop:2.4}). In \S2.5, we simplify this expression if $F^L(x^0,\theta)\in \diff(\bm)$ for all $(x^0, \theta).$  In \S2.6, we finish the proof of
Prop.~\ref{prop:2.1} provided $F^L(x^0,\theta)\in \ism(\bm)$.

\subsection{The main results}

Let $\bm$ be a closed $(2k-1)$-manifold. For  smooth maps $a:S^1\times \bm\to \bm$,
$F:[0,1]\times S^1\times \bm\to \bm$,  set
$a^L:\bm\to L\bm$ by $a^L(m)(\theta) = a(\theta,m),$ and 
$F^L:[0,1]\times \bm\to L\bm$ by $ F^L(t,m)(\theta) = F(t,\theta, m).$  
The key result Prop.~3.4 in \cite{MRT4} is replaced with two results.

\begin{prop} \label{prop:2.1} Let $a_0, a_1:S^1\times \bm\to \bm$ be $S^1$ actions on a closed Riemannian $(2k-1)$-manifold $(\bm,g)$ with $a_\theta := a(\theta,\cdot) \in \ism(\bm,g)$ for all $\theta\in S^1.$  Let $F:[0,1]\times S^1\times \bm\to \bm$ be a smooth homotopy from $a_0$ to $a_1$ ({\it i.e.,} $F(0,\cdot,\cdot) = a_0, F(1, \cdot,\cdot) = a_1$) with 
$F(x^0,\theta,\cdot)\in \ism(\bm,g)$ for all $(x^0,\theta).$  Then 
$$d_{\itm}F^{L,*}CS^W_{\kk} =0.$$
\end{prop}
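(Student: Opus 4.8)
The plan is to follow the chain
\[
d_{\itm}F^{L,*}CS^W_{\kk} \;=\; F^{L,*}\, d_{L\bm}CS^W_{\kk}
\]
and then prove that the right-hand side vanishes. The first equality is the naturality of the exterior derivative under the smooth map $F^L$ of Banach manifolds, which I would establish in its own right (this is \S2.2): although $L\bm$ is infinite-dimensional, $F^L$ is smooth in the Banach sense and the identity $d_M f^*\omega = f^* d_N\omega$ can be verified chart by chart exactly as in finite dimensions, the only point needing attention being that $CS^W_{\kk}$ and $d_{L\bm}CS^W_{\kk}$ are honest (smooth, bounded) forms on $L\bm$. Note that $d_{L\bm}CS^W_{\kk}$ is, up to a universal constant, a degree-$2k$ Wodzicki--Chern form of the $L^2$ Levi--Civita connection $\nl$ on $TL\bm$ (equivalently, the Chern--Weil term transgressed by $CS^W_{\kk}$); crucially this form is \emph{not} identically zero on $L\bm$ --- its $\theta$-derivative contributions and the positive-order part of $\nl$ make it differ from the fiber integral of the (dimensionally trivial) Chern form of $\bm$ --- so the desired vanishing genuinely has to come from the pullback together with the isometry hypothesis.

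Concretely, I would recall from \cite{MRT4} the explicit local coordinate expression for $CS^W_{\kk}$ as a sum of terms of the form $\int_{S^1}\operatorname{Wres}(\cdots)$ built from the Christoffel symbols and curvature of $\nl$ (\S2.3), differentiate it, and pull the result back along $F^L$ to obtain the local coordinate formula for $d_{\itm}F^{L,*}CS^W_{\kk}$ on $\itm = [0,1]\times\bm$; this is Prop.~\ref{prop:2.4}. Since $\dim([0,1]\times\bm) = 2k$, this pulled-back form is of top degree, hence encoded by a single density, and the task reduces to showing that density vanishes pointwise. The geometric input is that each slice $F(x^0,\theta,\cdot)$ is an isometry, so the generating vector fields $\partial_{x^0}F$ and $\partial_\theta F$ along $\bm$ are Killing, i.e. the symmetrized covariant derivatives $\operatorname{Sym}(\nabla\,\partial_{x^0}F)$ and $\operatorname{Sym}(\nabla\,\partial_\theta F)$ vanish. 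In \S2.5 I would first perform the simplification that uses only $F(x^0,\theta,\cdot)\in\diff(\bm)$, isolating precisely which symbol terms of $F^{L,*}\nl$ and its curvature survive the Wodzicki residue and the fiber integral over $S^1$ (generically this leaves a nonzero remainder --- which is exactly why \cite[Prop.~3.4]{MRT4} fails for diffeomorphisms); in \S2.6 I would then feed in the Killing condition, which makes those surviving terms cancel in pairs, or assemble into total $\theta$-derivatives that integrate to zero over the closed loop $S^1$, yielding $d_{\itm}F^{L,*}CS^W_{\kk} = 0$.

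The main obstacle is the bookkeeping in \S2.4--\S2.6: one has to carry the pseudodifferential symbols of the $0$-th order $\pdo$-valued connection $\nl$ and of its curvature through the pullback $F^{L,*}$, expand the Wodzicki residue into leading- and subleading-symbol contributions, and show that these reorganize into an exact cancellation --- and the passage from the $\diff$ case to the $\ism$ case is not formal, since it is only the extra antisymmetry forced by Killing fields that produces the final vanishing. A minor point to note is that $[0,1]\times\bm$ is a manifold with boundary, but since the claim is the pointwise vanishing of a top-degree form the boundary plays no role.
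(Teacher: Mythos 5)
Your outline follows the same route as the paper's proof: first $d_{\itm}F^{L,*}CS^W_{\kk}=F^{L,*}d_{L\bm}CS^W_{\kk}$ by naturality of pullback on Banach manifolds, then an explicit local-coordinate evaluation of the right-hand side as a top-degree form on $[0,1]\times\bm$, a simplification using only that each $F(x^0,\theta,\cdot)$ is a diffeomorphism, and finally the isometry hypothesis to reduce what remains to the integral of a total $\theta$-derivative over $S^1$. That last step of your plan is indeed the correct endgame.

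However, the proposal defers exactly the steps that carry the content, and in two places the mechanism you describe is not the one that works. First, there is no residual pseudodifferential-symbol or Wodzicki-residue bookkeeping at this stage: the residue was already evaluated in \cite{MRT4}, and $CS^W_{\kk}$ is the purely finite-dimensional expression (\ref{CSW}) built from the curvature tensor $K$ of (\ref{K_tensor}); everything after the naturality step is multilinear algebra with $K$ plus one integration by parts in $\theta$. Second, the cancellations are not produced by ``extra antisymmetry forced by Killing fields.'' What actually happens is: (a) in the Cartan-formula expansion of $F^{L,*}d_{L\bm}CS^W$, the sum of the terms involving $\partial_{x^\mu}K$ vanishes because the skew-symmetry of $K$ in its last $2k-1$ indices makes that sum the evaluation of an alternating $2k$-tensor, i.e.\ a $2k$-form, on the $(2k-1)$-manifold $\bm$ itself, not on $[0,1]\times\bm$ (Lem.~\ref{lem:2.2a}); (b) the diffeomorphism hypothesis is used to write $F_*\partial_{x^0}=\alpha^i\,F_*\partial_{x^i}$ in the frame $\{F_*\partial_{x^i}\}$, after which the terms containing $\partial^2F^\nu/\partial x^a\partial\theta$ cancel in pairs (Lem.~\ref{lem:2.3a}), leaving only the term with $\partial_\theta\alpha^i$; and (c) the isometry hypothesis enters through the invariance of the curvature, $F^*K=K$, rather than through the Killing equation: the contraction of $K(F(x^0,\theta,x))$ with the Jacobian factors equals $K_{i1\cdots 2k-1}(x)$, which is independent of $\theta$ and pulls out of the integral, leaving $K_{i1\cdots 2k-1}(x)\int_0^{2\pi}\partial_\theta\alpha^i\,d\theta=0$. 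Without (a)--(c) --- in particular without the skew-symmetry argument in (a) and the pairwise cancellation in (b) --- the proposal is a correct plan along the paper's lines but not yet a proof.
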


Here $CS^W_{\kk} $ depends on $g$  (\ref{CSW}). The proof is given in \S2.2 (see Lem.~\ref{lem:2.3}).  The proof fails if $\ism(\bm,g)$ is replaced with $\diff(\bm).$

From now on, we denote $\ism(\bm,g)$ by $\ism(\bm)$, except when we use the explicit metric $\bar g$ on $\bmp$ in \cite{MRT4}.

\begin{prop}[Prop.~3.4 in \cite{MRT4}] \label{prop:two} Let dim$(\bm)=2k-1.$ Let $a_0, a_1:S^1\times \bm\to \bm$ be actions through isometries as above.

(i)  If $a_0$ and $a_1$ are homotopic through isometries, then 
$\int_{\bm} a_0^{L,*}CS^W_{\kk} =  \int_{M} a_1^{L,*}CS^W_{\kk}$. 

(ii)  If there exists an action $a$ through isometries with $\int_M a^{L,*} CS^W_{\kk} \neq 0,$ then
  $\pi_1(\ism(M))$  is infinite.

\end{prop}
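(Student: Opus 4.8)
The plan is to derive both parts from Proposition \ref{prop:2.1} together with the naturality of the exterior derivative (to be established in \S2.2). For part (i), let $F$ be a homotopy from $a_0$ to $a_1$ through isometries, and consider $F^L:[0,1]\times\bm\to L\bm$. By Proposition \ref{prop:2.1}, the $(2k-1)$-form $F^{L,*}CS^W_{\kk}$ is closed on $[0,1]\times\bm$. I would then apply Stokes' theorem to this form over the compact manifold-with-boundary $[0,1]\times\bm$: since $d_{\itm}F^{L,*}CS^W_{\kk}=0$, we get $0=\int_{[0,1]\times\bm} d_{\itm}F^{L,*}CS^W_{\kk} = \int_{\partial([0,1]\times\bm)} F^{L,*}CS^W_{\kk}$. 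The boundary is $(\{1\}\times\bm) \sqcup (\{0\}\times\bm)$ with opposite orientations, and the restriction of $F^L$ to $\{i\}\times\bm$ is exactly $a_i^L$ (using $F(i,\cdot,\cdot)=a_i$ and the definitions of the associated loop-space maps). Hence $\int_{\bm} a_1^{L,*}CS^W_{\kk} - \int_{\bm} a_0^{L,*}CS^W_{\kk}=0$, which is the claim. One should be careful that pullback commutes with restriction to the boundary slices, but this is routine once the naturality statement $d_M f^*\omega = f^* d_N\omega$ of \S2.2 is in hand.

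For part (ii), suppose $a$ is an action through isometries with $\int_M a^{L,*}CS^W_{\kk}\neq 0$. The idea is to regard the loop $\theta\mapsto a_\theta$ as a loop in $\ism(M)$ based at $a_0=\mathrm{id}$ (after composing with $a_0^{-1}$ if needed, which does not change the setup since $\ism(M)$ is a group acting by isometries), giving a class $[\gamma]\in\pi_1(\ism(M))$. For each $n\in\Z$, the $n$-fold iterate $\gamma^n$ corresponds to the action $a^{(n)}_\theta := a_{n\theta}$, i.e. precomposition of the $S^1$-parameter with the degree-$n$ self-map of $S^1$. I would compute $\int_M (a^{(n)})^{L,*}CS^W_{\kk} = n\int_M a^{L,*}CS^W_{\kk}$: the map $(a^{(n)})^L$ factors as $a^L$ followed by, on the loop-space side, reparametrization by the degree-$n$ cover of $S^1$, and since $CS^W_{\kk}$ is built by integrating a $2k$-form over the $S^1$ factor (its definition (\ref{CSW})), this integration picks up a factor of $n$ (the degree). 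Therefore the integrals $n\int_M a^{L,*}CS^W_{\kk}$, $n\in\Z$, are pairwise distinct. By part (i), homotopic-through-isometries actions give equal integrals, so the classes $[\gamma^n]=n[\gamma]\in\pi_1(\ism(M))$ are pairwise distinct. Hence $\pi_1(\ism(M))$ contains an element of infinite order, so it is infinite.

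The main obstacle is the bookkeeping in the factor-of-$n$ computation for part (ii): one must verify that composing the $S^1$-action with the degree-$n$ map genuinely multiplies $\int_M a^{L,*}CS^W_{\kk}$ by $n$, which requires unwinding the explicit definition of $CS^W_{\kk}$ in terms of a transgression integral over the loop parameter and checking that the degree-$n$ reparametrization acts as expected on that integral, with no stray contributions from the $\bm$-directions. A secondary technical point is confirming that the assignment $\theta\mapsto a_\theta$ is continuous (indeed smooth) into $\ism(M)$ with its natural topology, and that $\pi_1$ is insensitive to the choice of topology on $\ism(M)$ compatible with this — but given that $\ism(M)$ is a finite-dimensional Lie group this is standard. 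Everything else reduces to Stokes' theorem and the naturality of $d$ from Proposition \ref{prop:2.1} and \S2.2.
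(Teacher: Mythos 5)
Your proposal is correct and takes essentially the same route as the paper: part (i) is Stokes' theorem applied to the closed form $F^{L,*}CS^W_{\kk}$ on $[0,1]\times\bm$ furnished by Prop.~\ref{prop:2.1}, and part (ii) is the factor-of-$n$ computation for the iterates $a(n\theta,m)$, which the paper carries out concretely by noting every term of $CS^W_{\kk}$ has the form $\int_{S^1}\dot\gamma(\theta)f(\theta)\,d\theta$ with $f$ periodic. The one step you only gesture at --- upgrading ``not homotopic through isometries'' to distinctness of the classes in $\pi_1(\ism(M))$ --- is exactly what the paper handles by invoking a modification of Lem.~3.3 of \cite{MRT4}, matching the smoothing/topology point you flag as standard.
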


\begin{proof} 
(i)  This is just Stokes' Theorem:  for $i_{x^0}:\bm\to [0,1]\times \bm, i_{x^0}(\bm) = (x^0,m),$ and $F$ the homotopy in Prop.~\ref{prop:2.1}, 
\begin{align*}\int_{\bm} a_0^{L,*}CS^W_{\kk} -  \int_{\bm} a_1^{L,*}CS^W_{\kk} &= 
\int_{\bm} i_0^*F^{L,*}CS^W_{\kk} -  \int_{\bm} i_1^* F^{L,*}CS^W_{\kk}\\
&= \int_{\itm} d_{\itm} F^*CS^W_{\kk} =0,
\end{align*}
by Prop.~\ref{prop:2.1}.

(ii) Let $a_n$ be the $n^{\rm th}$ iterate of 
 $a$, i.e. $a_n(\theta,m) =
a(n\theta,m).$  
We claim that 
 $\int_{\bm}a_n^{L,*}CS^W_{\kk} =
n\int_{\bm} a^{L,*}CS^W_{\kk}$.  By \cite[(2.5)]{MRT4}, every term in $CS^W_{\kk}$ is of the
form $\ints\dot\gamma(\theta) f(\theta)$, where $f$ is a periodic function on the
circle.  Each loop $\gamma\in
a^L_1(\bm)$ corresponds to the loop $\gamma(n\cdot)\in a^L_n(\bm).$  Therefore the term
$\ints\dot\gamma(\theta) f(\theta)$ is replaced by 
$$\ints \frac{d}{d\theta}\gamma(n\theta) f(n\theta)d\theta 
 = n\int_0^{2\pi} \dot\gamma(\theta)f(\theta)d\theta.$$
Thus $\int_{\bm}a_n^{L,*}CS^W_{\kk} = n\int_{\bm}a^{L,*}CS^W_{\kk}.$
 By (i), 
$a_n$ and $a_m$ are not homotopic through isometries.  By a straightforward modification of \cite[Lem.~3.3]{MRT4}, 
 the $[a^L_n]\in 
\pi_1(\ism(\bm))$
are all distinct.  
\end{proof}

This  recovers the results stated in the introduction, as the calculations of $a^{L,*}CS^W_{\kk}$ are unchanged.

\begin{cor} \label{cor:one} The results Cor.~3.9, Thm.~3.10, Cor.~3.11, Ex.~3.12, Ex.~3.13, Ex.~3.15, 
Prop.~ 3.16, Ex.~3.17, Prop.~ 3.18, Prop.~3.20 in \cite{MRT4} are correct with $\diff(\bmp)$ replaced by
$\ism(\bmp).$

In particular (Thm.~3.10), let $(M^4, J, g, \omega)$ be a compact integral K\"ahler surface, and let $\bmp$ be the circle bundle associated to $p[\omega]\in H^2(M, \Z)$ for $p\in \Z.$  Let $\bar g = \bar g_p$ be the metric on $\bmp$ given in \S3.2 of \cite{MRT4}. Then
the loop of diffeomorphisms of $\bmp$ given by rotation in the circle fiber is an element of infinite order in $\pi_1(\ism(\bmp, \bar g))$  if $|p| \gg 0.$

As another example (Ex.~3.13, Prop.~3.14), for the lens space $\overline{\C\PP^2}_p={\mathcal L}_p = S^5/\Z_p$, $p\neq 1,$ with the metric $\bar g$ on \cite[p.~500]{MRT4}, $\pi_1(\ism(\mathcal L_p, \bar g))$ is infinite.
\end{cor}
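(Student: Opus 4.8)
The plan is to observe that \emph{all} of the cited statements have the same shape --- "$\pi_1(\diff(\bmp))$ is infinite" proved by producing an explicit $S^1$-action $a$ with $\int_{\bmp}a^{L,*}CS^W_{\kk}\neq 0$ --- and that Prop.~\ref{prop:two}(ii) upgrades each such conclusion to "$\pi_1(\ism(\bmp))$ is infinite" \emph{provided the action $a$ is through isometries of the metric used}. The crucial point is that nothing in the Wodzicki--Chern--Simons bookkeeping changes: $CS^W_{\kk}$ and its pullback $a^{L,*}CS^W_{\kk}$ depend only on $(\bmp,g)$ and the action $a$, not on whether $a$ is viewed inside $\diff$ or $\ism$, so every value $\int_{\bmp}a^{L,*}CS^W_{\kk}\neq 0$ computed in \cite{MRT4} is still correct verbatim. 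Likewise, the $n$-th iterate $a_n(\theta,m)=a(n\theta,m)$ of an isometric action is again isometric, so Prop.~\ref{prop:two} applies to the $a_n$ and, by the modification of \cite[Lem.~3.3]{MRT4} noted in the proof of Prop.~\ref{prop:two}, the classes $[a^L_n]\in\pi_1(\ism(\bmp))$ are distinct.

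Thus the only new work is to verify, case by case, that the action used in each of Cor.~3.9--Prop.~3.20 of \cite{MRT4} is through isometries of the metric used there. For Thm.~3.10, $\bmp$ is the circle bundle of $p[\omega]$ over a compact integral K\"ahler surface $(M,J,g,\omega)$, and $\bar g_p$ is the Kaluza--Klein (Riemannian submersion) metric of \S3.2 of \cite{MRT4} built from $g$, a connection $1$-form, and the round fiber metric; rotation in the fiber preserves all three ingredients, hence is an isometry of $\bar g_p$. The value $\int_{\bmp}a^{L,*}CS^W_{\kk}$ for $a$ the fiber rotation is the (nonzero, for $|p|\gg 0$) quantity computed in \cite{MRT4}, so Prop.~\ref{prop:two}(ii) gives the first displayed conclusion. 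For the lens space $\mathcal L_p=S^5/\Z_p$ with the metric $\bar g$ on \cite[p.~500]{MRT4}: this $\bar g$ is the metric on the circle bundle of $p[\omega_{FS}]$ over $\C\PP^2$ (equivalently a Berger-type quotient of the round $S^5$), which is invariant under the residual circle action coming from the Hopf fibration; that action descends to $\mathcal L_p$ and remains isometric, and the relevant WCS integral is the nonzero value from Ex.~3.13/Prop.~3.14. Hence $\pi_1(\ism(\mathcal L_p,\bar g))$ is infinite. The remaining items in the list are handled identically, since each uses a fiberwise circle action and a submersion-type metric.

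The main obstacle is simply the careful reading of \S3 of \cite{MRT4} required to confirm, for every listed result, that the metric written there is already invariant under the circle action employed --- i.e. that the metrics chosen "for diffeomorphisms" happened to be bundle/submersion metrics. In the one or two cases where a metric was not manifestly invariant, the remedy is to average it over the compact group $S^1$ (which does not affect the cohomology class, and, by the naturality $d_M f^*\omega=f^* d_N\omega$ together with the computation that every term of $CS^W_{\kk}$ has the form $\ints \dot\gamma(\theta)f(\theta)$, does not affect the nonvanishing of $\int_{\bmp}a^{L,*}CS^W_{\kk}$), so that the hypothesis of Prop.~\ref{prop:two} is met after this harmless adjustment. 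With that, the corollary follows.
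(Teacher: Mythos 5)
Your main line of argument is exactly the paper's: the values $\int_{\bmp}a^{L,*}CS^W_{\kk}$ computed in \cite{MRT4} are unchanged, the circle actions used there are already isometries of the submersion-type metrics $\bar g_p$, and Prop.~\ref{prop:two}(ii) then upgrades each conclusion from $\diff(\bmp)$ to $\ism(\bmp)$. That is precisely the (one-line) justification the paper gives, so on the essential points you are correct.

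One caution about your fallback remark: the suggestion that a non-invariant metric could be ``harmlessly'' averaged over $S^1$ without affecting the nonvanishing of $\int_{\bmp}a^{L,*}CS^W_{\kk}$ is not justified. The form $CS^W_{\kk}$ is built from the curvature tensor of the metric via the tensor $K$ in (\ref{K_tensor}), so replacing $\bar g$ by its $S^1$-average changes $K$ and hence the integrand pointwise; and since $CS^W_{\kk}$ is \emph{not} closed on $L\bm$ (that failure is the whole reason for this correction paper), there is no cohomological or homotopy-invariance argument that preserves the value of the integral under a metric deformation. Neither the naturality $d_Mf^*\omega=f^*d_N\omega$ nor the structural observation that each term has the form $\int_{S^1}\dot\gamma(\theta)f(\theta)\,d\theta$ controls the new value. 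Fortunately this contingency does not arise: the metrics in \S3 of \cite{MRT4} are already invariant under the fiber rotations used, so the averaging step is never needed --- but as written it is a gap, and if such a case did occur the integral would have to be recomputed for the averaged metric.
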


\begin{rem}  The  metric on $\mathcal L_p$ coming from the standard metric on $S^5$ has isometry group
$SO(6)/\Z_p$, which has finite fundamental group.  Thus $\bar g$ is not the standard metric.  This can also be seen from \cite[Prop.~3.6]{MRT4}, since the sectional curvatures of $\calL_p$ with the standard metric are independent of $p$.
\end{rem} 

This implies a result about isometries of $S^5$ that was missed in \cite{MRT4}.

\begin{prop} \label{prop:new}
$S^5$ admits an infinite family of pairwise nonisometric metrics $h_p$ with \\
$|\pi_1(\ism(S^5, h_p))| = \infty.$    
\end{prop}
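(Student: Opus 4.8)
The plan is to build the metrics $h_p$ on $S^5$ from the lens space metrics $\bar g = \bar g_p$ on $\mathcal L_p = S^5/\Z_p$ already produced in \cite{MRT4} and discussed in Cor.~\ref{cor:one}. Since $S^5 \to \mathcal L_p$ is a finite normal covering with deck group $\Z_p$, the metric $\bar g_p$ pulls back to a metric $h_p$ on $S^5$, and the covering map becomes a local isometry $(S^5, h_p) \to (\mathcal L_p, \bar g_p)$. The $S^1$-action on $\mathcal L_p$ coming from rotation in the circle fiber (the loop of isometries of infinite order in $\pi_1(\ism(\mathcal L_p,\bar g_p))$) lifts to an $S^1$-action on $S^5$ by $h_p$-isometries: indeed the fiber rotation on the circle bundle $\bmp$ covering $\mathcal L_p$ is exactly what one wants, or more directly one lifts the $S^1$-action through the covering since $S^1$ is connected and the action fixes the (discrete) deck group structure.

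First I would make precise the lifting step: given a path $a_n$ of isometries of $(\mathcal L_p, \bar g_p)$ based at the identity, lift it to a path $\widetilde{a}_n$ of isometries of $(S^5, h_p)$ based at the identity, using that $\ism(S^5,h_p) \to \ism(\mathcal L_p, \bar g_p)$ is a covering map on identity components (the isometry group of a compact manifold is a compact Lie group, and the covering $S^5\to\mathcal L_p$ induces a Lie group homomorphism on identity components of isometry groups with discrete kernel and image). Then I would show that the loop $a^L_n$ of fiber rotations, which represents an infinite-order class in $\pi_1(\ism(\mathcal L_p, \bar g_p))$ by Cor.~\ref{cor:one}, lifts to a loop in $\ism(S^5, h_p)$: the $n$-fold fiber rotation on $\mathcal L_p$ is a genuine loop (not just a path), and because rotation by $2\pi$ in the $S^1$-fiber is the identity on $\mathcal L_p$ and lifts to the identity on $S^5$ (the $\Z_p$-action commutes with fiber rotation), these are bona fide loops in $\ism(S^5,h_p)$. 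Their images under $\pi_1(\ism(S^5,h_p)) \to \pi_1(\ism(\mathcal L_p,\bar g_p))$ are the distinct infinite-order classes $[a^L_n]$, so the $[\widetilde a^L_n]$ must themselves be pairwise distinct, giving $|\pi_1(\ism(S^5,h_p))|=\infty$.

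For the pairwise non-isometry of the $h_p$, I would invoke curvature: by the Remark and \cite[Prop.~3.6]{MRT4}, the metric $\bar g_p$ on $\mathcal L_p$ is distinguished from the standard one precisely by its sectional curvatures depending on $p$, and since $S^5\to\mathcal L_p$ is a local isometry, the sectional curvature function of $(S^5,h_p)$ at a point equals that of $(\mathcal L_p,\bar g_p)$ at the image point. Thus the set of sectional curvature values (or, say, the range of the scalar curvature, or some curvature invariant extracted from \cite[Prop.~3.6]{MRT4}) varies with $p$, so the $(S^5,h_p)$ are pairwise non-isometric for distinct $p$ (after possibly passing to an infinite subfamily, or rescaling to normalize one curvature quantity and reading off another).

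The main obstacle I expect is the lifting of isometries and loops of isometries through the covering $S^5 \to \mathcal L_p$ — specifically, checking that a loop of isometries of $\mathcal L_p$ downstairs lifts to a loop (and not merely a path) of isometries upstairs, and that the lift of the infinite-order class remains infinite order. This requires care with the relationship between $\pi_1(\ism(\mathcal L_p))$, $\pi_1(\ism(S^5))$, and the fundamental groups of the manifolds themselves; the cleanest route is probably to observe that the fiber-rotation $S^1$-action is already defined on the circle bundle $\bmp$ over the K\"ahler surface $M$, that $\mathcal L_p$ is the relevant $\bmp$ when $M = \C\PP^2$, and that $S^5$ with $h_p$ is likewise a circle bundle realization, so the fiber rotation is manifestly an $S^1$-action through $h_p$-isometries whose $n$-fold iterates give the required infinite family via Prop.~\ref{prop:two}(ii) applied directly on $(S^5,h_p)$.
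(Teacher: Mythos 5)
Your overall strategy is the paper's: lift $\bar g_p$ to a metric $h_p$ on $S^5$, distinguish the $h_p$ by sectional curvature via \cite[Prop.~3.6]{MRT4} (this part is fine and matches the paper), and exploit the fiber rotation. But there are two genuine problems in the middle of your argument. First, the claim that the $S^1$-action $a$ on $\calL_p$ lifts to an $S^1$-action on $S^5$ is false, and the hedge ``rotation by $2\pi$ in the $S^1$-fiber \ldots lifts to the identity on $S^5$'' is exactly where it fails: a full rotation in the fiber of $\calL_p\to \overline{\C\PP^2}$ lifts to rotation by $2\pi/p$ in the Hopf fiber of $S^5$, i.e.\ to the generator of the deck group $\Z_p$, not to the identity. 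The paper states this explicitly: $a$ does \emph{not} lift to a circle action; only its $p$-th iterate $a_p$ lifts, to an action $b_p$ on $S^5$. Connectedness of $S^1$ buys you a lifted path of isometries, not a lifted loop. Second, your fallback of pushing classes forward along $\pi_1(\ism(S^5,h_p))\to\pi_1(\ism(\calL_p,\bar g_p))$ is not justified: a homotopy through isometries of $(S^5,h_p)$ between two lifted loops need not consist of isometries that normalize the deck group, so it need not descend to $\calL_p$, and no such homomorphism of fundamental groups is available without further argument.

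What actually closes the proof, and what is missing from your proposal, is the quantitative input for Prop.~\ref{prop:two}(ii) applied directly on $(S^5,h_p)$: one must check that $\int_{S^5} b_p^{L,*} CS^W_{5,S^5}\neq 0$. The paper gets this from the local isometry $\pi:(S^5,h_p)\to(\calL_p,\bar g_p)$, which gives the pointwise identity $CS^W_{5,S^5}(\gamma)=CS^W_{5,\calL_p}(\pi\circ\gamma)$ since $CS^W$ is built from the curvature tensor, and then from the iterate formula in the proof of Prop.~\ref{prop:two}(ii): $\int_{\calL_p} a_p^{L,*}CS^W_{5,\calL_p}=p\int_{\calL_p}a^{L,*}CS^W_{5,\calL_p}\neq 0$. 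Your final sentence gestures at ``Prop.~\ref{prop:two}(ii) applied directly on $(S^5,h_p)$,'' which is the right move, but without the nonvanishing of the $CS^W$ integral upstairs that proposition has nothing to act on; supplying that computation (and replacing $a$ by $a_p$ as the action that actually lifts) is the substance of the proof.
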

\begin{proof}
The metric $\bar g_p, p \neq 1,$ on $\mathcal L_p$ lifts to a locally isometric metric $h_p$ on $S^5.$  As above, the $h_p$ have different sectional curvatures  
\cite[Prop.~3.6]{MRT4}, so they are not isometric.  The circle action $a:S^1 \times \calL_p\to \calL_p $ does not lift to a circle action on $S^5$, 
but $a_p$ does lift to an action $b_p:S^1\times S^5\to S^5$.  Since $\bar g_p, h_p$ are locally isometric, we get $CS^W_{\kk,S^5}(\gamma)(\theta) = CS^W_{\kk,\calL_p}(\pi\circ \gamma)(\theta), $ where $\pi:S^5\to\calL_p$ is the projection.  This implies
$$\int_{S^5} b^{L,*} CS^W_{5,S^5} =   \int_{\calL_p} a_p^{L,*} CS^W_{5,\calL_p} = p
\int_{\calL_p} a^{L,*} CS^W_{5,\calL_p} \neq 0.$$
As above, this implies $|\pi_1(\ism(S^5, h_p))| = \infty.$
\end{proof}

Note that $\pi_1(\ism(S^5, g_0)) = \Z_2$ for the standard metric $g_0$, and for a generic metric $g$ on $S^5$, we expect
$\ism(S^5, g) = \{{\rm Id}\},$ $\pi_1(\ism(S^5, g))= 1.$  Thus the metrics $h_p$ are ``special, but not too special."

\subsection{Naturality of pullbacks of forms on Banach manifolds}

We first derive the surely known result that the proof that $d_{M}f^*\omega = f^*d_N \omega$ (for $f:M\to N$ a differentiable map between finite dimensional  manifolds and $\omega\in \Omega^s(N)$) extends to infinite dimensional smooth Banach manifolds like $LM.$  On an infinite dimensional smooth manifold $N$, the exterior derivative can only be defined by the Cartan formula:
\begin{align*}d_N\omega(x^0,\ldots X_{s})_p &= \sum_i (-1)^i X_i(\omega(x^0,\ldots,\widehat{X_i},\ldots, X_{s})\\
&\qquad 
+ \sum_{i<j} (-1)^{i+j}\omega([X_i,X_j], x^0,\ldots, \widehat{X_i},\ldots, \widehat{X_j},\ldots,X_{s}),
\end{align*}
where $X_i\in T_pN$ are extended to vector fields near $p$ using a chart map (see {\it e.g.,} \cite[\S33.12]{KM}).  
\begin{lem} \label{lem:2.1}  Let $f:M\to N$ be a smooth map between smooth Banach manifolds, and let $\omega\in \Omega^*(N).$
Then $d_Mf^*\omega = f^*d_N \omega$.  In particular, $d_{\itbm}F^{L,*}CS^W_{\kk} = F^{L^,*} d_{L\bm} CS^W_{\kk}$ in Prop.~\ref{prop:2.1}.
\end{lem}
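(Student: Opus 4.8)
The plan is to reduce the infinite-dimensional statement to the classical finite-dimensional fact by working entirely in charts. First I would fix a point $p \in M$ and tangent vectors $X_0, \dots, X_s \in T_pM$, and choose a chart $(U, \varphi)$ around $p$ together with a chart $(V, \psi)$ around $f(p)$ with $f(U) \subseteq V$. Using the chart map $\varphi$, extend each $X_i$ to a (locally constant, in the chart) vector field $\widetilde X_i$ on $U$; then $df(\widetilde X_i)$ is a vector field along $f$ on $V$, and the Cartan formula for $d_M f^*\omega$ at $p$ involves the brackets $[\widetilde X_i, \widetilde X_j]$, which vanish identically in this chart. The key identity to establish is that $f_*$ intertwines the bracket structure up to terms that get killed in the alternating sum: more precisely, $[df(\widetilde X_i), df(\widetilde X_j)]$ need not vanish, but one shows it equals $df([\widetilde X_i,\widetilde X_j]) + (\text{correction})$, and since $[\widetilde X_i,\widetilde X_j]=0$ the correction is exactly what appears when one differentiates the components of $f$ twice. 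The symmetry of second derivatives (valid for smooth maps of Banach spaces, i.e. the second Fréchet derivative $D^2 f_p$ is a symmetric bilinear map) then forces these correction terms to cancel in the antisymmetrized Cartan expression.

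The cleanest organization of the key steps is: (1) observe that both sides of $d_M f^*\omega = f^* d_N\omega$ are tensorial in the $X_i$ and local in $M$, so it suffices to verify the identity pointwise in a pair of charts; (2) in those charts, $M$ and $N$ are (open sets in) Banach spaces, $f$ is a smooth map between Banach spaces, and the vector fields $\widetilde X_i$ are constant; (3) write out the Cartan formula for $d_M f^*\omega$, expand $X_i\big(\omega_{f(\cdot)}(df(\widetilde X_0),\dots,\widehat{df(\widetilde X_i)},\dots)\big)$ by the product/chain rule, picking up a term where the derivative hits the coefficient $\omega$ (composed with $f$) and terms where it hits one of the $df(\widetilde X_j)$ factors, the latter producing $D^2 f_p(X_i, X_j)$; (4) similarly expand the bracket terms, using that $[df(\widetilde X_i), df(\widetilde X_j)] = D^2 f_p(X_i, X_j) - D^2 f_p(X_j, X_i) = 0$ by symmetry of the second derivative, so the bracket terms in $d_M f^*\omega$ contribute only via $\omega$'s own derivative, reproducing the pullback of the bracket terms in $d_N\omega$; (5) collect: the "$\omega$ differentiated" pieces assemble precisely into $f^* d_N\omega(X_0,\dots,X_s)_p$, while the "$D^2 f$" pieces cancel pairwise under the $(-1)^i$ antisymmetrization because $D^2 f_p$ is symmetric. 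Finally I would note that $F^L : [0,1]\times\bm \to L\bm$ is smooth between smooth Banach manifolds (the latter by the standard exponential-law / manifold-of-mappings structure, e.g. as in \cite{KM}), and $CS^W_{\kk} \in \Omega^{\kk}(L\bm)$, so the lemma applies to give $d_{\itbm} F^{L,*} CS^W_{\kk} = F^{L,*} d_{L\bm} CS^W_{\kk}$.

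The main obstacle I anticipate is purely bookkeeping rather than conceptual: making sure that all the chart-dependent choices (the extension of $X_i$ to vector fields, which is required even to write the Cartan formula) genuinely drop out, and that the convergence/continuity issues one might worry about in infinite dimensions are vacuous here because everything at the relevant point is a finite algebraic combination of values of $\omega$, its first chart-derivative, and the first and second Fréchet derivatives of $f$ — all of which exist for smooth maps of Banach spaces. A secondary, milder point is that the Cartan formula is only \emph{a priori} well-defined once one fixes the chart-extension convention, so I would remark at the outset that $d_N$ is chart-independent (this is part of why the Cartan formula defines a genuine operator on $\Omega^*(N)$, cf. \cite[\S33.12]{KM}), which legitimizes the reduction to charts in step (1). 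With those caveats in place the computation is the same antisymmetrization-kills-the-symmetric-part argument that proves naturality in finite dimensions.
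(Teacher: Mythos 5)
Your argument is correct, but it is not the route the paper takes, so a comparison is in order. You work entirely in a pair of charts, extend the $X_i$ to constant vector fields (so all brackets on the $M$-side and on the $N$-side vanish), and observe that the only discrepancy between $d_Mf^*\omega$ and $f^*d_N\omega$ comes from the Leibniz terms in which $\widetilde X_i$ differentiates a factor $df(\widetilde X_j)$, producing $\omega(\ldots,D^2f_p(X_i,X_j),\ldots)$; these cancel in pairs because $D^2f_p$ is symmetric while the Cartan sum is alternating (the $(i,j)$ and $(j,i)$ terms differ by the sign $(-1)^{i}$ versus $(-1)^{j}(-1)^{j-i-1}$). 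The paper instead first proves the identity when $f$ is an immersion, where $f_*$ intertwines brackets and no second derivatives appear, and then handles a general $f$ by factoring it as $\pi_N\circ G$ through the graph immersion $G(m)=(m,f(m))$, using the decomposition $d_{M\times N}\alpha=\pi_M^*d_M(i_M^*\alpha)+\pi_N^*d_N(i_N^*\alpha)$. Your proof is more self-contained and makes explicit that the only analytic input beyond the definition of $d$ via chart extensions is the symmetry of the second Fr\'echet derivative of a smooth map of Banach spaces; the paper's proof avoids second derivatives altogether at the cost of the auxiliary product formula and the graph trick. One cosmetic caution: your preliminary remark about the bracket $[df(\widetilde X_i),df(\widetilde X_j)]$ is not literally meaningful, since $df(\widetilde X_i)$ is only a vector field along $f$ rather than a vector field on $N$ (the paper's immersion step has the same looseness); but your actual computation in steps (3)--(5) never uses that bracket, only the brackets $[\widetilde X_i,\widetilde X_j]=0$ on $M$ and the constant extensions of $f_*X_i$ on $N$, so the proof stands as written.
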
 

  \begin{proof} First assume that $f$ is an immersion on a neighborhood $U_p$ of a fixed $p\in M$. 
For fixed vector fields $Y_i$ on $U_p$, set
 $g:f(U_p)\to \R$, $g(n) = \omega(f_*Y_1,\ldots, f_*Y_s)_{n}$. 
We have $(g\circ f)(m) =  \omega(f_*Y_1,\ldots, f_*Y_s)_{f(m)}$.  Thus the identity $X_m(g\circ f) = (f_*X)
_{f(m)}(g)$
becomes 
$$X_m(\omega(f_*Y_1,\ldots, f_*Y_s)) = (f_*X)_{f(m)}(\omega(f_*Y_1,\ldots, f_*Y_s)).$$
Dropping $m, f(m)$, we get
\begin{align*}f^*d_N\omega(x^0,\ldots, X_{s}) &=d_N\omega(f_*x^0,\ldots, f_*X_{s})\\
&=\sum_i (-1)^i f_*X_i(\omega(f_*x^0,\ldots,\widehat{f_*X_i},\ldots, f_*X_{s})\\
&\qquad 
+ \sum_{i<j} (-1)^{i+j}\omega([f_*X_i,f_*X_j], f_*x^0,\ldots, \widehat{f_*X_i},\ldots, \widehat{f_*X_j},\ldots,f_*X_{s})\\
&= \sum_i (-1)^i X_i(\omega(f_*x^0,\ldots,\widehat{f_*X_i},\ldots, f_*X_{s})\\
&\qquad 
+ \sum_{i<j} (-1)^{i+j}\omega(f_*[X_i,X_j], f_*x^0,\ldots, \widehat{f_*X_i},\ldots, \widehat{f_*X_j},\ldots,f_*X_{s})\\
&= d_M \omega (f_*x^0,\ldots, f_*X_s) = d_Mf^*\omega(x^0,\ldots,X_s),
\end{align*}
where we use $[f_*X_i,f_*X_j] = f_*[X_i,X_j]$ for immersions.

In general, 
consider the graph $G:M\to M\times N$, $G(m) = (m, f(m)).$  Then $\pi_N\circ G = f$ for the projection $\pi_N:M\times N\to N$. (We similarly define $\pi_M.$)
$G$ is an immersion, with $G_*(Y) = (Y, f_*Y)$ taking a vector field on $M$ to a well-defined vector field on $M\times N.$   

Fix $(m_0, n_0)\in M\times N$, and set $i_M:M\to N\times N$, $i_M:N\to M\times N$ by
$i_M(m) = (m, n_0), i_N(n) = (m_0,n).$   If  a vector $(X^0,Y_0)\in T_{(m_0,n_0)}M\times N$
is extended to a nearby vector field $(X,Y)$ with $X$ constant in $N$ directions and $Y$ constant in $M$ directions, 
it is straightforward to apply the Cartan formula to derive  the standard equality (usually abbreviated 
$d_{M\times N} = d_M + d_N$)
$$d_{M\times N}\alpha_{(m_0,n_0)} = \pi_M^*[d_M (i_M^*\alpha)_{m_0}] + \pi_N^*[d_N (i_N^*\alpha)_{n_0}],$$ 
for $\alpha\in \Omega^*(M\times N).$  
Since $\pi i_M:m\mapsto n_0$ (so $d_Mi_M^*\pi^*\omega = 0$) and $\pi i_N = {\rm id}$,  the argument above for the immersion $G$ yields
\begin{align*} d_Mf^*\omega &= d_M G^*\pi_N^*\omega = G^* d_{M\times N}\pi_N^*\omega 
= G^*[\pi_M^*d_M i_M^*\pi^*\omega + \pi_N^*d_N i_N^*\pi^*\omega]\nonumber\\
&= G^*\pi_N^*d_N i_N^*\pi^*\omega = f^* d_N \omega.
\end{align*}
\end{proof}

\subsection{Local coordinates expression}

We work in local coordinates $(x^0, x) = (x^0, x^1,\ldots x^{2k-1})$ on $\itbm$. 
Let 
\begin{equation} 
\label{K_tensor}
\begin{aligned}
& K_{\nu \lambda_1 \cdots \lambda_{2k-1}} \\
&\quad \quad 
= \sum_{\sigma } \sgn({\sigma})
R_{\lambda_{\sigma (1) e_1 \nu}}{}^{e_{2}} 
R_{\lambda_{\sigma (2) } \lambda_{\sigma (3)} e_{3}}{}^{e_1} 
R_{\lambda_{\sigma (4) } \lambda_{\sigma (5)} e_{1}}{}^{e_{3}} 
\cdots 
R_{\lambda_{\sigma (2k-2) } \lambda_{\sigma (2k-1)} e_{2}}{}^{e_{k-1}},
\end{aligned}
\end{equation} 
for $\sigma$ a permutation of $\{1,\ldots,2k-1\}$, and where $R_{ijk}^{\ \ \ \ell}$ are the components of the curvature tensor of the metric on $M$.
\begin{equation}\label{K}K_{\nu \lambda_1 \cdots \lambda_{2k-1}}dx^\nu\otimes dx^{\lambda_1}\wedge\ldots\wedge dx^{\lambda_{2k-1}}
\end{equation}
is the local expression of an element of 
$\Omega^1( {\bm}) \otimes \Omega^{2k-2} (\bm).$
For $\gamma\in L\bm$ and $X_{\gamma,i}\in T_\gamma L\bm$, we set  
\begin{equation}
\label{CSW}  
CS^W (\gamma ) (X_{\gamma, 1} , \cdots , X_{\gamma , 2k-1}) \\
= \int_0^{2\pi}
 K_{\nu \lambda_1 \cdots \lambda_{2k-1}} (\gamma (\theta))
{\dot \gamma}^{\nu} (\theta )
X_{\gamma ,1}^{\lambda_{1}}(\theta ) \cdots X_{\gamma , 2k-1}^{\lambda_{2k-1}}
 d\theta.
\end{equation}
Then $CS^W 
\in \Omega^{2k-1} (L{\bm})$, since we have contracted out the $\nu$ index.  Since the integrand in (\ref{CSW}) is tensorial, we can integrate over $[0, 2\pi]$ even if the image of $\gamma$ does not lie in one coordinate chart.

\subsection{Computing $F^{L,*} d_{L\bm} CS^W$}
We have
\begin{align*}
 \MoveEqLeft{(d_{L\bm} CS^W_\gamma ) 
 (X_{\gamma ,0}, X_{\gamma ,1}, \cdots , X_{\gamma ,2k-1} ) }\nonumber\\
 &=
 \sum_{a=0}^{2k-1} 
 (-1)^a X_{\gamma ,a} 
  (CS^W  (X_{\gamma ,0}, \cdots , 
  \widehat{X_{\gamma ,a}},
   \cdots , X_{\gamma ,2k-1})) \\
&\qquad +
\sum_{a<b} (-1)^{a+b} 
 (CS^W  ( [X_{\gamma , a}, X_{\gamma , b}], 
 X_{\gamma ,0}, \cdots , \widehat{X_{\gamma ,a}},
   \cdots ,  \widehat{X_{\gamma , b}}, X_{\gamma ,2k-1})) \nonumber  \\
   &:= \sum_a (1)_a + \sum_{a<b}(2)_{a,b}\nonumber.
 \end{align*}
Let 
$\gamma_s (\theta) \in L{\bm} $ be a family of loops with 
$\gamma_0 (\theta ) = \gamma (\theta)$, 
${\frac{d}{ds}}\bigl|_{s=0} \gamma_s = X_{\gamma , a} $. 
Then
\begin{align}\label{eq:4}
\MoveEqLeft{  X_{\gamma ,a} 
  (CS^W  (X_{\gamma ,0}, \cdots , \widehat{X_{\gamma ,a}},
   \cdots , X_{\gamma ,2k-1})) }\nonumber \\
 &= 
\int_{0}^{2\pi} 
 {\frac{d}{ds}}\biggl|_{s=0} 
\left[K_{\nu \lambda_0 \cdots \widehat{\lambda_a}\cdots \lambda_{2k-1}} 
(\gamma_s (\theta )) {\dot{\gamma_s}^\nu} 
X_{\gamma_s,0}^{\lambda_0}
\cdots \widehat{X_{\gamma_s ,a}^{\lambda_a}}\cdots
X_{\gamma_s,2k-1}^{\lambda_{2k-1}}
 d\theta \right] \nonumber\\
&=
\int_{0}^{2\pi} 
 \partial_{x^\mu}  K_{\nu \lambda_0 \cdots \widehat{\lambda_a}\cdots\lambda_{2k-1}}  
(\gamma(\theta)) 
X_{\gamma, a}^{\mu} \dot{\gamma}^\nu (\theta) 
X_{\gamma ,0}^{\lambda_0 } (\theta) 
\cdots \widehat{X_{\gamma ,a}^{\lambda_a}}(\theta)\cdots
X_{\gamma ,2k-1}^{\lambda_{2k-1} } (\theta) 
 d\theta  \nonumber\\
&\qquad +
\int_{0}^{2\pi} 
K_{\nu \lambda_0 \cdots \widehat{\lambda_a}\cdots \lambda_{2k-1}} (\gamma (\theta) )
\dot{X}_{\gamma ,a}^\nu (\theta ) 
X_{\gamma ,0}^{\lambda_0 } (\theta) 
\cdots\widehat{X_{\gamma ,a}^{\lambda_a}}(\theta)\cdots
X_{\gamma ,2k-1}^{\lambda_{2k-1} } (\theta) \\
&\qquad +
\int_{0}^{2\pi} 
K_{\nu \lambda_0 \cdots\widehat{\lambda_a}\cdots \lambda_{2k-1}} (\gamma (\theta) )
\dot{\gamma}^\nu (\theta) 
\left(\delta_{X_{\gamma, a}}X_{\gamma, 0}^{\lambda_0}\right)
X_{\gamma ,1}^{\lambda_2 } (\theta) 
\cdots\widehat{X_{\gamma ,a}^{\lambda_a}}(\theta)\cdots
X_{\gamma ,2k-1}^{\lambda_{2k-1} } (\theta) \nonumber\\
&\qquad + \cdots\nonumber \\
&\qquad +
\int_{0}^{2\pi} 
K_{\nu \lambda_0 \cdots\widehat{\lambda_a}\cdots \lambda_{2k-1}} (\gamma (\theta) )
\dot{\gamma}^\nu (\theta) 
X_{\gamma ,0}^{\lambda_0 } (\theta) 
\cdots\widehat{X_{\gamma ,a}^{\lambda_a}}(\theta)\cdots
X_{\gamma ,2k-2}^{\lambda_{2k-2} } (\theta) 
\left(\delta_{X_{\gamma, a}}X_{\gamma, 2k-1}^{\lambda_{2k-1}}\right).\nonumber
\end{align}
Denote the last three lines of (\ref{eq:4}) by $(\ref{eq:4})_a.$ Then it is easily seen that
\begin{equation*}
 \sum_{a=0}^{2k-1} (-1)^a  (\ref{eq:4})_a
   + \sum_{a<b} (2)_{a,b} =0
\end{equation*}
Therefore, 
\begin{align*}
 \MoveEqLeft{(d_{L\bm} CS^W_\gamma ) 
 (X_{\gamma ,0}, X_{\gamma ,1}, \cdots , X_{\gamma ,2k-1} ) } \\
&= \sum_{a=0}^{2k-1} (-1)^a
\int_{0}^{2\pi} 
 \partial_{x^\mu}  K_{\nu \lambda_0 \cdots \widehat{\lambda_a}\cdots\lambda_{2k-1}}  
(\gamma(\theta)) 
X_{\gamma, a}^{\mu} \dot{\gamma}^\nu (\theta) 
X_{\gamma ,0}^{\lambda_0 } (\theta) 
\cdots \widehat{X_{\gamma ,a}^{\lambda_a}}(\theta)\cdots
X_{\gamma ,2k-1}^{\lambda_{2k-1} } (\theta) 
 d\theta  \nonumber\\
&\qquad +\sum_{a=0}^{2k-1} (-1)^a
\int_{0}^{2\pi} 
K_{\nu \lambda_0 \cdots \widehat{\lambda_a}\cdots \lambda_{2k-1}} (\gamma (\theta) )
\dot{X}_{\gamma ,a}^\nu (\theta ) 
X_{\gamma ,0}^{\lambda_0 } (\theta) 
\cdots\widehat{X_{\gamma ,a}^{\lambda_a}}(\theta)\cdots
X_{\gamma ,2k-1}^{\lambda_{2k-1} } (\theta).
\end{align*}

For the pullback, we consider 
$( F^{L,*} d_{L{\bm}}  CS^W) 
(\partial_{x^0}, \partial_{x^1}, 
\cdots, \partial_{x^{2k-1}})$
as a function on $[0,1]\times U$, where $(U,x=(x^1,\ldots, x^{2k-1}))$ is a coordinate chart on $\bm.$
Then
\begin{align}\label{eq:pullback}
\MoveEqLeft{( F^{L,*} d_{L{\bm}}  CS^W) 
(\partial_{x^0}, \partial_{x^1}, 
\cdots, \partial_{x^{2k-1}})_{(x^0,x)} } \nonumber\\
&=
d_{L{\bm}} CS^W
 \left( F^L_* \partial_{x^0},  F^L_* \partial_{x^1}, 
\cdots,   F^L_* \partial_{x^{2k-1}} \right)_{F(x^0,x)} \nonumber\\
&=
d_{L{\bm} } CS^W \left(
\frac{\partial F^{\lambda_0}}{\partial x^0} \partial_{x^{\lambda_0}}, 
\frac{\partial F^{\lambda_1}}{\partial x^1}\partial_{x^{\lambda_1}},
\cdots ,
\frac{\partial F^{\lambda_{2k-1}}}{\partial x^{2k-1}}\partial_{x^{\lambda_{2k-1}}}\right)_{F(x^0,x)}
\\
&= 
\sum_{a=0}^{2k-1} (-1)^a 
\int_0^{2\pi} \partial_{x^\mu} 
K_{\nu \lambda_0 \cdots\widehat{\lambda_a}\cdots \lambda_{2k-1}} 
(F(x^0,\theta, x))
\frac{\partial F^\mu}{\partial x^a}
\frac{\partial F^\nu}{\partial \theta}
\frac{\partial F^{\lambda_0}}{\partial x^0}\cdots
\widehat{\frac{\partial F^{\lambda_a}}{\partial x^a} }
\cdots
\frac{\partial F^{\lambda_{2k-1}}}{\partial x^{2k-1}}
 d\theta     \nonumber\\
&\qquad +
\sum_{a=0}^{2k-1} (-1)^a  
\int_0^{2\pi}
K_{\nu \lambda_0 \cdots \widehat{\lambda_a} \cdots \lambda_{2k-1}} 
(F(x^0, \theta , x))
\frac{\partial^2 F^\nu}{\partial x^a \partial \theta}
\frac{\partial F^{\lambda_0}}{\partial x^0}
\cdots
\widehat{\frac{\partial F^{\lambda_a}}{\partial x^a}}
\cdots
\frac{\partial F^{\lambda_{2k-1}}}{\partial x^{2k-1}}
 d\theta\nonumber
\end{align}
One term in the last equation in (\ref{eq:pullback}) vanishes.  The proof is in the Appendix.

\begin{lem} \label{lem:2.2a}
\begin{align*}
\int_0^{2\pi} \sum_{a=0}^{2k-1} (-1)^a 
\partial_{x^\mu} 
K_{\nu \lambda_0 \cdots \widehat{\lambda_a}\cdots\lambda_{2k-1}} 
(F(x^0,\theta, x))
\frac{\partial F^\nu}{\partial \theta}
\frac{\partial F^\mu}{\partial x^a}
\frac{\partial F^{\lambda_0}}{\partial x^0}\cdots
\widehat{\frac{\partial F^{\lambda_a}}{\partial x^a} }
\cdots
\frac{\partial F^{\lambda_{2k-1}}}{\partial x^{2k-1}}
 d\theta 
&=0.
\end{align*}
\end{lem}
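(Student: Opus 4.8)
The claim is purely algebraic: the integrand in Lemma~\ref{lem:2.2a} vanishes identically in $\theta$, independently of the isometry hypothesis in Prop.~\ref{prop:2.1} (that hypothesis is needed only to handle the remaining sum in (\ref{eq:pullback})). The plan is to exhibit the integrand as the contraction of a tensor identity on $\bm$ that holds simply because $\dim\bm=2k-1$.

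First I would record the symmetry we need. By (\ref{K_tensor}), for each fixed value of $\nu$ the array $K_{\nu\lambda_1\cdots\lambda_{2k-1}}$ is totally antisymmetric in $\lambda_1,\dots,\lambda_{2k-1}$, hence so is $\partial_{x^\mu}K_{\nu\lambda_1\cdots\lambda_{2k-1}}$. The indices $\mu,\lambda_1,\dots,\lambda_{2k-1}$ are $2k$ indices each ranging over $\{1,\dots,2k-1\}$, so this array, antisymmetrized over all $2k$ of them (with $\nu$ held fixed), is an alternating array in $2k$ slots taking only $2k-1$ values and therefore vanishes identically; equivalently, for fixed $\nu$ the $(2k-1)$-form $K_{\nu\lambda_1\cdots\lambda_{2k-1}}\,dx^{\lambda_1}\wedge\cdots\wedge dx^{\lambda_{2k-1}}$ on a chart of $\bm$ has vanishing exterior derivative. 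Using the antisymmetry of $K$ in its last $2k-1$ slots to collapse this antisymmetrization to the choice of which index occupies the derivative slot, one gets the tensor identity, with free indices $\nu,\lambda_0,\dots,\lambda_{2k-1}$,
$$\sum_{a=0}^{2k-1}(-1)^a\,\partial_{x^{\lambda_a}}K_{\nu\,\lambda_0\cdots\widehat{\lambda_a}\cdots\lambda_{2k-1}}=0,$$
where in the $a$-th term $\lambda_a$ is the derivative index and the remaining $2k-1$ of the $\lambda$'s fill the antisymmetric slots of $K$.

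Finally, I would contract this identity with $\frac{\partial F^\nu}{\partial\theta}\frac{\partial F^{\lambda_0}}{\partial x^0}\cdots\frac{\partial F^{\lambda_{2k-1}}}{\partial x^{2k-1}}$, with $K$ and its first derivatives evaluated at $F(x^0,\theta,x)$: in the $a$-th term the derivative index $\lambda_a$ is contracted with $\partial F^{\lambda_a}/\partial x^a$, and renaming it $\mu$ we recover exactly the $a$-th summand of the integrand in Lemma~\ref{lem:2.2a}. Hence the integrand is identically zero and the lemma follows. The only step that needs care is the sign bookkeeping when collapsing the antisymmetrization (moving $\lambda_a$ past $\lambda_0,\dots,\lambda_{a-1}$ contributes the factor $(-1)^a$); there is no analytic difficulty at all.
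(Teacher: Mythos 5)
Your proposal is correct and is essentially the paper's own argument: the paper likewise reduces the claim to the fact that the contracted array $\sum_a(-1)^a\,\partial_{\lambda_a}K_{\nu\lambda_0\cdots\widehat{\lambda_a}\cdots\lambda_{2k-1}}\xi^\nu X_0^{\lambda_0}\cdots X_{2k-1}^{\lambda_{2k-1}}$ is alternating in $2k$ arguments on the $(2k-1)$-manifold $\bm$ (using the total antisymmetry of $K$ in $\lambda_1,\dots,\lambda_{2k-1}$ from (\ref{K_tensor})), hence vanishes. The only difference is presentational: you state the identity directly as the vanishing of $dK_\nu$ for the top-degree form $K_\nu$, whereas the paper verifies the skew-symmetry by explicit transposition checks in the case $2k-1=5$.
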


Thus, we have
\begin{prop}\label{prop:2.4}
\begin{align}\label{eq:four}
\MoveEqLeft{F^{L,*} d_{L{\bar M}} CS^W 
(\partial_{x^0}, \partial_{x^1}, 
\cdots, \partial_{x^{2k-1}})_{(x^0,x)}}\nonumber\\
&=
\sum_{a=0}^{2k-1} (-1)^a  
\int_0^{2\pi}
K_{\nu \lambda_0 \cdots \widehat{\lambda_a} \cdots \lambda_{2k-1}} 
(F(x^0, \theta , x))
\frac{\partial^2 F^\nu}{\partial x^a \partial \theta}
\frac{\partial F^{\lambda_0}}{\partial x^0}
\cdots
\widehat{\frac{\partial F^{\lambda_a}}{\partial x^a}}
\cdots
\frac{\partial F^{\lambda_{2k-1}}}{\partial x^{2k-1}}
 d\theta.
\end{align}
\end{prop}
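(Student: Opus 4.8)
The plan is to read off Proposition~\ref{prop:2.4} directly from the computation~(\ref{eq:pullback}) together with Lemma~\ref{lem:2.2a}. The last equality in~(\ref{eq:pullback}) already expresses $F^{L,*}d_{L\bm}CS^W(\partial_{x^0},\ldots,\partial_{x^{2k-1}})_{(x^0,x)}$ as a sum of two families of $\theta$-integrals: one carrying the factors $\partial_{x^\mu}K$ and $\partial F^\mu/\partial x^a$, the other carrying a single second-derivative factor $\partial^2 F^\nu/\partial x^a\partial\theta$. Lemma~\ref{lem:2.2a} says the $\theta$-integral of the first family is zero, so what survives is precisely the right-hand side of~(\ref{eq:four}). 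Thus the only substantive point is Lemma~\ref{lem:2.2a}, and I now describe how I would prove it.

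The idea is that the offending integrand is, at each fixed $\theta$, the value of the exterior derivative of a top-degree form on $\bm$, hence vanishes for dimensional reasons. Fix $(x^0,x)$ and $\theta$, set $q=F(x^0,\theta,x)$, and work in a coordinate chart about $q$. In the $a$-th summand of the integrand of Lemma~\ref{lem:2.2a} the derivative index $\mu$ is contracted only with $\partial F^\mu/\partial x^a$, which is exactly the factor deleted from the product over the source coordinates $x^0,\ldots,x^{2k-1}$; so after renaming the dummy index $\mu\mapsto\lambda_a$ and reinserting the factor $\partial F^{\lambda_a}/\partial x^a$, that product no longer depends on $a$ and can be pulled out of the alternating sum, leaving the integrand in the form
$$\frac{\partial F^\nu}{\partial\theta}\left(\prod_{c=0}^{2k-1}\frac{\partial F^{\lambda_c}}{\partial x^c}\right)\sum_{a=0}^{2k-1}(-1)^a\,\partial_{x^{\lambda_a}}K_{\nu\lambda_0\cdots\widehat{\lambda_a}\cdots\lambda_{2k-1}}(q).$$
For each fixed value of the coordinate index $\nu$, the components $K_{\nu\lambda_1\cdots\lambda_{2k-1}}$ are totally antisymmetric in $\lambda_1,\ldots,\lambda_{2k-1}$ — immediate from~(\ref{K_tensor}), since interchanging two $\lambda$'s merely precomposes the summation permutation with a transposition. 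Hence $K_\nu:=K_{\nu\lambda_1\cdots\lambda_{2k-1}}\,dx^{\lambda_1}\wedge\cdots\wedge dx^{\lambda_{2k-1}}$ is a genuine form of top degree $2k-1$ on the chart, and the alternating sum above is exactly its component $(dK_\nu)_{\lambda_0\cdots\lambda_{2k-1}}(q)$. Since $\dim\bm=2k-1$, the $2k$-form $dK_\nu$ is identically zero; so the integrand of Lemma~\ref{lem:2.2a} vanishes at every $\theta$, its $\theta$-integral is $0$, and substituting this into~(\ref{eq:pullback}) gives~(\ref{eq:four}).

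I do not expect a serious obstacle. The one place to be careful is the index bookkeeping: one must check that in each summand the derivative index is genuinely free to be moved into the ``$a$-th slot'', and that once $\partial F^{\lambda_a}/\partial x^a$ is reinserted the remaining product really is $a$-independent, so that the alternating sum honestly reconstitutes $dK_\nu$ and not just some partial antisymmetrization. It is also worth stressing that this step is purely local and algebraic: it uses only smoothness of $F$, not that $F(x^0,\theta,\cdot)$ is an isometry (that hypothesis enters only afterwards, in simplifying~(\ref{eq:four}) itself), and it is legitimate to argue chart by chart because the assertion concerns the value of the integrand at each point of the loop $\theta\mapsto F(x^0,\theta,x)$, which need not lie in a single chart.
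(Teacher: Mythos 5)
Your proposal is correct and follows essentially the same route as the paper: Proposition~\ref{prop:2.4} is read off from the last line of (\ref{eq:pullback}) once Lemma~\ref{lem:2.2a} kills the $\partial_{x^\mu}K$ family, and Lemma~\ref{lem:2.2a} itself is proved by recognizing the alternating sum as the (totally antisymmetric) component array of a $2k$-form on the $(2k-1)$-manifold $\bm$, which must vanish. Your packaging of that sum as $(dK_\nu)_{\lambda_0\cdots\lambda_{2k-1}}$ is just a slightly slicker way of stating the skew-symmetry that the paper's appendix verifies by hand.
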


\subsection{Homotopies of loops of diffeomorphisms}

We now make the assumption that
\begin{equation}\label{eq:diff} F(x^0, \theta,\cdot):\bm\to\bm \ \text{is a diffeomorphism for all} \ (x^0,\theta)\in [0,1]\times S^1.
\end{equation}
Then $\{F_*(\partial/\partial x^i)\}_{i=1}^{2k-1}$ is a basis of $T_{F(x^0,\theta,x)}\bm$ for all $(x^0,\theta,x).$
Therefore, there exist functions $\alpha^i =\alpha^i(x^0,\theta,x)$, $i = 1,\ldots,2k-1$, such that
\begin{equation}\label{eq:five} F_*\left(\frac{\partial}{\partial x^0}\right) = 
\alpha^iF_*\left(\frac{\partial}{\partial x^i}\right).
\end{equation}
Using coordinates $y^i = y^i(x^0,\theta, x)$ near $y = F(x^0,\theta,x)$, we have
$$F_*\left(\frac{\partial}{\partial x^0}\biggl|_{_{(x^0, \theta,x)}}\right) = \frac{\partial F^\lambda}{\partial x^0} \frac{\partial}{\partial y^\lambda}\biggl|_{_{y}} \in T_y\bm,\ 
F_*\left(\frac{\partial}{\partial x^i}\biggl|_{_{(x^0, \theta,x)}}\right) = \frac{\partial F^\lambda}{\partial x^i} \frac{\partial}{\partial y^\lambda}\biggl|_{_{y}} \in T_y\bm.$$
Thus
\begin{equation}\label{eq:five}
\frac{\partial F^\lambda}{\partial x^0} = \alpha^i\frac{\partial F^\lambda}{\partial x^i},\ 
\frac{\partial^2 F^\lambda}{\partial\theta \partial x^0} = 
\frac{\partial \alpha^i}{\partial\theta} \frac{\partial F^\lambda}{\partial x^i}
+ \alpha^i\frac{\partial^2 F^\lambda}{\partial\theta \partial x^i}.
\end{equation}
Plugging (\ref{eq:five}) into (\ref{eq:four}) gives
\begin{align}\label{eq.8}
\MoveEqLeft{F^{L,*} d_{L{\bar M}} CS^W 
(\partial_{x^0}, \partial_{x^1}, 
\cdots, \partial_{x^{2k-1}})}\nonumber\\
&= 
 \int_0^{2\pi} K_{\nu \lambda_1  \cdots \lambda_{2k-1}}
\left( \frac{\partial \alpha^i}{\partial\theta} \frac{\partial F^\nu}{\partial x^i}
+ \alpha^i\frac{\partial^2 F^\nu}{\partial\theta \partial x^i}\right)
\frac{\partial F^{\lambda_1}}{\partial x^1}
\cdots
\frac{\partial F^{\lambda_{2k-1}}}{\partial x^{2k-1}}
 d\theta\\
&\qquad  +\sum_{a=1}^{2k-1} 
(-1)^a  
\int_0^{2\pi}
K_{\nu \lambda_0 \cdots \widehat{\lambda_a} \cdots \lambda_{2k-1}} 
\frac{\partial^2 F^\nu}{\partial x^a \partial \theta} \left( \alpha^i\frac{\partial F^{\lambda_0}}{\partial x^i}
\right) \frac{\partial F^{\lambda_1}}{\partial x^1}
\cdots 
\widehat{\frac{\partial F^{\lambda_a}}{\partial x^a}}
\cdots
\frac{\partial F^{\lambda_{2k-1}}}{\partial x^{2k-1}}
 d\theta. \nonumber
\end{align}
The sum of the terms with the second partial derivatives vanishes:
\begin{lem}\label{lem:2.3a}
\begin{align}\label{eq:9}\MoveEqLeft{ 0= \int_0^{2\pi} K_{\nu \lambda_1\ldots\lambda_{2k-1}}
 \alpha^i\frac{\partial^2 F^\nu}{\partial\theta \partial x^i}
\frac{\partial F^{\lambda_1}}{\partial x^1}
\cdots
\frac{\partial F^{\lambda_{2k-1}}}{\partial x^{2k-1}}
 d\theta}\nonumber \\
&+ \sum_{a=1}^{2k-1} 
(-1)^a  
\int_0^{2\pi}
K_{\nu \lambda_0 \cdots \widehat{\lambda_a} \cdots \lambda_{2k-1}} 
\frac{\partial^2 F^\nu}{\partial x^a \partial \theta} \left( \alpha^i\frac{\partial F^{\lambda_0}}{\partial x^i}
\right) \frac{\partial F^{\lambda_1}}{\partial x^1}
\cdots 
\widehat{\frac{\partial F^{\lambda_a}}{\partial x^a}}
\cdots
\frac{\partial F^{\lambda_{2k-1}}}{\partial x^{2k-1}}
 d\theta.
 \end{align}
 \end{lem}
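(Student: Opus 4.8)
The plan is to prove the sharper pointwise statement that the sum of the two integrands appearing on the right-hand side of (\ref{eq:9}) vanishes identically in $\theta$; integrating over $[0,2\pi]$ then gives the lemma, with no integration by parts needed. The only ingredients are the total antisymmetry of $K_{\nu\lambda_1\cdots\lambda_{2k-1}}$ in its $2k-1$ lower $\lambda$-indices, which is manifest from the sum over $\sigma$ in (\ref{K_tensor}), and the diffeomorphism identity (\ref{eq:five}), $\partial F^{\lambda}/\partial x^0=\alpha^i\,\partial F^{\lambda}/\partial x^i$ (sum over $i=1,\ldots,2k-1$), which is available precisely because of hypothesis (\ref{eq:diff}).

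First I would abbreviate $v_j^{\lambda}:=\partial F^{\lambda}/\partial x^j$ and $H_j^{\nu}:=\partial^2F^{\nu}/\partial\theta\,\partial x^j=\partial^2F^{\nu}/\partial x^j\,\partial\theta$ for $j=0,1,\ldots,2k-1$, and note that in the first integral of (\ref{eq:9}) the prefactor $\alpha^iH_i^{\nu}$ carries a sum over $i$, while in the second the prefactor $\alpha^i\,\partial F^{\lambda_0}/\partial x^i=\alpha^iv_i^{\lambda_0}$ also carries a sum over $i$. Collecting the coefficient of $\alpha^i$ for each fixed $i\in\{1,\ldots,2k-1\}$, it then suffices to show that
\[
C_i:=K_{\nu\lambda_1\cdots\lambda_{2k-1}}H_i^{\nu}v_1^{\lambda_1}\cdots v_{2k-1}^{\lambda_{2k-1}}+\sum_{a=1}^{2k-1}(-1)^aK_{\nu\lambda_0\lambda_1\cdots\widehat{\lambda_a}\cdots\lambda_{2k-1}}H_a^{\nu}v_i^{\lambda_0}v_1^{\lambda_1}\cdots\widehat{v_a^{\lambda_a}}\cdots v_{2k-1}^{\lambda_{2k-1}}=0 .
\]

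To prove $C_i=0$ I would split the $a$-sum into the terms $a\ne i$ and $a=i$. For $a\ne i$, the vectors contracted against the $2k-1$ antisymmetric $\lambda$-slots of $K$ are $v_i$ (in slot $\lambda_0$) together with $\{v_j:1\le j\le 2k-1,\ j\ne a\}$; since $i\ne a$, the vector $v_i$ occupies two of these slots and the term vanishes by antisymmetry of $K$. For $a=i$, the $\lambda$-slots are filled, in order, by $(v_i,v_1,\ldots,\widehat{v_i},\ldots,v_{2k-1})$, and moving $v_i$ from the first slot into the $i$-th slot is an $i$-cycle, of sign $(-1)^{i-1}$; hence that term equals $(-1)^i(-1)^{i-1}K_{\nu\lambda_1\cdots\lambda_{2k-1}}H_i^{\nu}v_1^{\lambda_1}\cdots v_{2k-1}^{\lambda_{2k-1}}$, which is exactly minus the first term of $C_i$. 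Therefore $C_i=0$, the combined integrand equals $\sum_i\alpha^iC_i=0$, and (\ref{eq:9}) follows.

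The only real obstacle is the sign bookkeeping: keeping track of which of the labels $\lambda_0,\ldots,\lambda_{2k-1}$ is hatted in each summand and getting the cyclic-permutation sign right. There is no analytic subtlety, and no curvature (Bianchi) identity is needed here — those enter only later, when one upgrades from diffeomorphisms to isometries. It is worth flagging that hypothesis (\ref{eq:diff}) is used in an essential way: it is exactly what lets us write $v_0=\partial F/\partial x^0$ in the frame $v_1,\ldots,v_{2k-1}$ of $T_{F(x^0,\theta,x)}\bm$, and doing so is precisely what, for $a\ne i$, forces some $v_i$ into two of the antisymmetrized $\lambda$-slots and thereby annihilates those terms.
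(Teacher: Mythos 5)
Your proof is correct and follows essentially the same route as the paper's: both arguments work at the level of the integrand, use the total antisymmetry of $K$ in its $\lambda$-indices to kill every cross term in the double sum over $i$ and $a$ (where $v_i=\partial F/\partial x^i$ would occupy two antisymmetrized slots), and cancel the surviving diagonal terms $a=i$ against the first integral via a relabeling of $\lambda_0$, with your explicit $(-1)^{i}(-1)^{i-1}=-1$ sign count matching the paper's cancellation. The only difference is organizational (you fix $i$ and sum over $a$, the paper fixes $a$ and sums over $i$, writing out the case $2k-1=5$), which is immaterial.
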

 
 The proof is in the Appendix.
 Changing the index $\nu$ to $\lambda_0$,  we have proved the following:
 \begin{lem}\label{lem:2.2}
 Under assumption (\ref{eq:diff}), we have
 \begin{equation}\label{eq:10}
 F^{L,*} d_{L{\bar M}} CS^W (\partial_{x^0}, \partial_{x^1}, \cdots, \partial_{x^{2k-1}}) = 
  \int_0^{2\pi} K_{\lambda_0 \lambda_1\ldots\lambda_{2k-1}}
\frac{\partial \alpha^i}{\partial\theta} \frac{\partial F^{\lambda_0}}{\partial x^i}
\frac{\partial F^{\lambda_1}}{\partial x^1}
\cdots
\frac{\partial F^{\lambda_{2k-1}}}{\partial x^{2k-1}}
 d\theta.
 \end{equation}
  \end{lem}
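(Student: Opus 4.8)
The plan is to assemble the local computation of \S2.4--\S2.5 and then invoke the vanishing asserted in Lemma~\ref{lem:2.3a}. By Proposition~\ref{prop:2.4}, the function $F^{L,*}d_{L\bm}CS^W(\partial_{x^0},\dots,\partial_{x^{2k-1}})$ on $[0,1]\times U$ is given by the right-hand side of~(\ref{eq:four}), which still involves $\partial F^{\lambda_0}/\partial x^0$ and the mixed derivative $\partial^2 F^\nu/\partial x^0\partial\theta$. First I would eliminate these using hypothesis~(\ref{eq:diff}): for each fixed $(x^0,\theta)$ the map $F(x^0,\theta,\cdot)$ is a diffeomorphism of $\bm$, so $\{F_*(\partial/\partial x^i)\}_{i=1}^{2k-1}$ is a basis of $T_{F(x^0,\theta,x)}\bm$ at every point, whence $F_*(\partial/\partial x^0)=\alpha^i F_*(\partial/\partial x^i)$ for uniquely determined functions $\alpha^i=\alpha^i(x^0,\theta,x)$, smooth by Cramer's rule. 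Comparing components and differentiating in $\theta$ yields the two identities~(\ref{eq:five}). Substituting the first for $\partial F^{\lambda_0}/\partial x^0$ and the second for $\partial^2 F^\nu/\partial x^0\partial\theta$ in~(\ref{eq:four}) reorganizes it into~(\ref{eq.8}), namely the sum of one term built from $\partial\alpha^i/\partial\theta$ and a collection of terms built from $\alpha^i$ times the mixed second partials $\partial^2 F^\nu/\partial\theta\partial x^i$.

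The decisive step is Lemma~\ref{lem:2.3a}, which states precisely that the sum of all the second-derivative terms appearing in~(\ref{eq.8}) vanishes. Granting it, only the $\partial\alpha^i/\partial\theta$ term remains, so
\[
F^{L,*}d_{L\bm}CS^W(\partial_{x^0},\dots,\partial_{x^{2k-1}})
= \int_0^{2\pi} K_{\nu\lambda_1\cdots\lambda_{2k-1}}\,
\frac{\partial\alpha^i}{\partial\theta}\,\frac{\partial F^\nu}{\partial x^i}\,
\frac{\partial F^{\lambda_1}}{\partial x^1}\cdots\frac{\partial F^{\lambda_{2k-1}}}{\partial x^{2k-1}}\,d\theta .
\]
Finally I would rename the contracted dummy index $\nu$ (which is paired with $\partial F^\nu/\partial x^i$) as $\lambda_0$, turning $K_{\nu\lambda_1\cdots\lambda_{2k-1}}$ into $K_{\lambda_0\lambda_1\cdots\lambda_{2k-1}}$; this is exactly~(\ref{eq:10}), completing the proof.

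Apart from Proposition~\ref{prop:2.4} (which already absorbs the combinatorial cancellation of \S2.4 and, via Lemma~\ref{lem:2.2a}, the vanishing of the $\partial_{x^\mu}K$ term), the argument above is only substitution and index bookkeeping, so the genuine obstacle is Lemma~\ref{lem:2.3a}, whose proof is deferred to the Appendix. There I expect the route to be: integrate the mixed derivative $\partial^2 F^\nu/\partial\theta\partial x^i$ by parts in $\theta$ around $S^1$ (the boundary term vanishing by periodicity), then use the antisymmetry of $K_{\nu\lambda_1\cdots\lambda_{2k-1}}$ in its last $2k-1$ indices together with the alternating sign $(-1)^a$ to pair the resulting terms against the $a=1,\dots,2k-1$ terms and cancel them termwise. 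This is the one point where invertibility of $F(x^0,\theta,\cdot)$ is essential: the collapse $F_*(\partial/\partial x^0)=\alpha^i F_*(\partial/\partial x^i)$ has no analogue for a general smooth homotopy, which is precisely why Proposition~\ref{prop:2.1} fails with $\ism(\bm)$ replaced by $\diff(\bm)$.
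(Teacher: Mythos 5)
Your proposal is correct and follows the paper's own route exactly: Proposition~\ref{prop:2.4}, the substitution (\ref{eq:five}) supplied by the diffeomorphism hypothesis to pass to (\ref{eq.8}), the cancellation of all second-derivative terms via Lemma~\ref{lem:2.3a}, and the final relabeling $\nu\mapsto\lambda_0$. One incidental remark: the appendix proof of Lemma~\ref{lem:2.3a} is not an integration by parts in $\theta$ but a pointwise algebraic cancellation --- in the $a$-th term of the second sum only $\alpha^a\,\partial F^{\lambda_0}/\partial x^a$ survives, since $K$ is skew in $\lambda_0,\lambda_j$ while $\partial F^{\lambda_0}/\partial x^j\,\partial F^{\lambda_j}/\partial x^j$ is symmetric, and the surviving terms cancel the $\alpha^i\,\partial^2 F^\nu/\partial\theta\partial x^i$ contributions one by one.
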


  \subsection{Homotopies by loops of isometries}
We now make the further assumption that
\begin{equation}\label{eq:isom} F^I(x^0,\theta) := F(x^0, \theta,\cdot):\bm\to\bm \ \text{is an isometry for all} \ (x^0,\theta)\in [0,1]\times S^1.
\end{equation}
Thus for fixed $(x^0,\theta)$, 
$$ g_{ij}(x) = (F^{L,*}g)_{ij}(x) =g_{\lambda\mu}(F(x^0,\theta,x)) \frac{\partial F^{L, \lambda}}{\partial x^i}
\biggl|_{_{(x^0,\theta,x)}}
\frac{\partial F^{L, \lambda}}{\partial x^i}\biggl|_{_{(x^0,\theta,x)}}.$$
With some notation dropped, it follows that
\begin{align}\label{eq:11}
R_{ijk\ell}(x) &= (F^{L,*}R)_{ijk\ell}(x) = R_{\lambda\mu\nu\kappa}(F(x^0,\theta,x))
\frac{\partial F^{L, \lambda}}{\partial x^i}\frac{\partial F^{L, \mu}}{\partial x^j}
\frac{\partial F^{L, \nu}}{\partial x^k}\frac{\partial F^{L, \kappa}}{\partial x^{\ell}}\nonumber\\
K_{i_0 i_1\ldots i_{2k-1}}(x) &=(F^{L,*}K)_{i_0 i_1\ldots i_{2k-1}}(x)
 = K_{\lambda_0\lambda_1\ldots\lambda_{2k-1}}(F(x^0,\theta,x)) \lambda_1\ldots\lambda_{2k-1}\\
&\qquad \cdot \frac{\partial F^{L, \lambda_0}}{\partial x^{i_0}}\frac{\partial F^{L, \lambda_1}}{\partial x^{i_1}}\cdot\ldots\cdot
\frac{\partial F^{L, \lambda_{2k-1}}}{\partial x^{i_{2k-1}}}.
\nonumber
\end{align}

The following computation finishes the proof of Prop.~\ref{prop:2.1}.
\begin{lem}\label{lem:2.3}
    Under the assumption (\ref{eq:isom}), we have
    $$F^{L,*} d_{L{\bar M}} CS^W=0.$$ 
\end{lem}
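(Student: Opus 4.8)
The plan is to deduce Lemma~\ref{lem:2.3} from the diffeomorphism formula (\ref{eq:10}) of Lemma~\ref{lem:2.2} together with the fact that isometries preserve the curvature tensor, hence the tensor $K$. First I observe that hypothesis (\ref{eq:isom}) implies (\ref{eq:diff}), so Lemma~\ref{lem:2.2} applies and, making the argument of $K$ explicit,
\[
F^{L,*}d_{L\bm}CS^W(\partial_{x^0},\ldots,\partial_{x^{2k-1}}) = \int_0^{2\pi} K_{\lambda_0\lambda_1\ldots\lambda_{2k-1}}(F(x^0,\theta,x))\,\frac{\partial\alpha^i}{\partial\theta}\,\frac{\partial F^{\lambda_0}}{\partial x^i}\frac{\partial F^{\lambda_1}}{\partial x^1}\cdots\frac{\partial F^{\lambda_{2k-1}}}{\partial x^{2k-1}}\,d\theta .
\]
Since $K_{\nu\lambda_1\cdots\lambda_{2k-1}}$ in (\ref{K_tensor}) is built algebraically from the Riemann tensor $R$ (with indices raised by the metric $g$), and $F^I(x^0,\theta)=F(x^0,\theta,\cdot)$ is an isometry of $(\bm,g)$ for each fixed $(x^0,\theta)$, we have $(F^I(x^0,\theta))^*R=R$, $(F^I(x^0,\theta))^*g=g$, and therefore $(F^I(x^0,\theta))^*K=K$. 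In local coordinates this is precisely the identity for $K$ in (\ref{eq:11}):
\[
K_{\lambda_0\lambda_1\ldots\lambda_{2k-1}}(F(x^0,\theta,x))\,\frac{\partial F^{\lambda_0}}{\partial x^i}\frac{\partial F^{\lambda_1}}{\partial x^1}\cdots\frac{\partial F^{\lambda_{2k-1}}}{\partial x^{2k-1}} = K_{i\,1\,2\cdots(2k-1)}(x),
\]
and the crucial observation is that the right-hand side depends only on the base point $x$, carrying no $(x^0,\theta)$ dependence.

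Substituting this into the integral above, the integrand collapses to $K_{i\,1\,2\cdots(2k-1)}(x)\,\partial_\theta\alpha^i(x^0,\theta,x)$, whose only $\theta$-dependent factor is $\partial_\theta\alpha^i$. Hence
\[
F^{L,*}d_{L\bm}CS^W(\partial_{x^0},\ldots,\partial_{x^{2k-1}}) = \sum_i K_{i\,1\,2\cdots(2k-1)}(x)\int_0^{2\pi}\frac{\partial\alpha^i}{\partial\theta}(x^0,\theta,x)\,d\theta = \sum_i K_{i\,1\,2\cdots(2k-1)}(x)\bigl(\alpha^i(x^0,2\pi,x)-\alpha^i(x^0,0,x)\bigr)=0,
\]
because, under (\ref{eq:diff}), the functions $\alpha^i$ determined by $F_*(\partial/\partial x^0)=\alpha^i F_*(\partial/\partial x^i)$ are smooth and $2\pi$-periodic in $\theta$ (the frame $\{F_*(\partial/\partial x^i)\}$ depends smoothly on $(x^0,\theta,x)\in[0,1]\times S^1\times\bm$, and the $\alpha^i$ are independent of the coordinate chart chosen near the image point, hence globally defined along the loop $\theta\mapsto F(x^0,\theta,x)$). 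Finally, $F^{L,*}d_{L\bm}CS^W$ is a form of degree $2k$ on the $2k$-dimensional manifold $[0,1]\times\bm$, so the vanishing of this single coordinate component in every chart yields $F^{L,*}d_{L\bm}CS^W=0$, which is the assertion of Lemma~\ref{lem:2.3}.

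The substantive ingredients are the two already established upstream: the diffeomorphism-case simplification (\ref{eq:10}) (whose proof rests on Lemmas~\ref{lem:2.2a} and \ref{lem:2.3a} in the Appendix) and the isometry identity (\ref{eq:11}). Given those, I expect no real obstacle: the only points needing care are (i) verifying that $\alpha^i(x^0,\cdot,x)$ is genuinely a smooth function on $S^1$, so that $\int_0^{2\pi}\partial_\theta\alpha^i\,d\theta=0$ by the fundamental theorem of calculus, and (ii) the elementary reduction from vanishing of one coordinate component to vanishing of the top-degree form. In short, the new content of Lemma~\ref{lem:2.3} is simply that passing from diffeomorphisms to isometries freezes the curvature factor $K_{i\,1\,2\cdots(2k-1)}$ in $\theta$, after which periodicity of $\alpha^i$ annihilates the remaining $\theta$-integral.
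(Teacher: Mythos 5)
Your proposal is correct and follows essentially the same route as the paper's proof: apply Lemma~\ref{lem:2.2}, use the isometry identity (\ref{eq:11}) to replace the curvature factor by the $\theta$-independent quantity $K_{i\,1\cdots(2k-1)}(x)$, and then kill the remaining integral $\int_0^{2\pi}\partial_\theta\alpha^i\,d\theta$ by periodicity of the $\alpha^i$. Your added remarks on the global well-definedness of the $\alpha^i$ along the loop and on deducing vanishing of the full top-degree form from its single coordinate component are the same points the paper addresses (the former in the closing sentence of its proof), just spelled out more explicitly.
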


\begin{proof}
   By Lemma~\ref{lem:2.2}, at a fixed $x^0$, we have
    \begin{align*}
 \MoveEqLeft{   F^{L,*} d_{L{\bar M}} CS^W    (\partial_{x^0}, \partial_{x^1}, \cdots, \partial_{x^{2k-1}})|_{x} }\\
 &= 
  \int_0^{2\pi} K_{\lambda_0 \lambda_1\ldots\lambda_{2k-1}}(F(x^0,\theta,x))
\frac{\partial \alpha^i}{\partial\theta} \frac{\partial F^{\lambda_0}}{\partial x^i}
\frac{\partial F^{\lambda_1}}{\partial x^1}
\cdots
\frac{\partial F^{\lambda_{2k-1}}}{\partial x^{2k-1}}
 d\theta\\
 &=\int_0^{2\pi}\frac{\partial \alpha^i}{\partial\theta} K_{i1\ldots 2k-1}(x) d\theta
 = K_{i1\ldots 2k-1}(x)\int_0^{2\pi}\frac{\partial \alpha^i}{\partial\theta} d\theta\\
 &=0
    \end{align*}
    using (\ref{eq:11}).  As in (\ref{CSW}), the integration over $[0, 2\pi]$ is valid, because the $\alpha^i$ are the components of a tensor/vector (\ref{eq:five}). 
\end{proof}

\section{Other corrections}
We list other corrections by section.

\subsection{Corrections in \S3}
\begin{itemize}
\item The equation  $\alpha \mapsto \alpha' \mapsto \alpha'_*[M]$ on p.~497 should be deleted. 
    \item In Prop. 3.1, $\eta$ is not a homomorphism, but this claimed result is not used anywhere in the paper.
    \item The right hand side of (3.5) is missing $\int_{S^1}.$
\end{itemize}

\subsection{Corrections in Appendix B} 

Throughout the paper, we used a high Sobolev topology or the Fr\'echet topology on $LM$, and we claimed that the homotopy type of $LM$ is independent of this choice. We claimed the same for $\diff(M)$.  However, we can only prove this results for the $C^k$ topologies on $LM$, $k\in [0, \infty)$, as we now show.  As a result, we should use $C^k$ topologies in the paper.  However, while $\diff(M)$ is infinite dimensional,  $\ism(M)$ is a finite dimensional Lie group
\cite[vol.~I, p.~239]{K-N}, and every differentiable isometry is smooth\footnote{
{\tt https://mathoverflow.net/questions/262058/a-differentiable-isometry-is-smooth} }.  Therefore, we don't need the result below, but we include it to correct the wrong proof in \cite{MRT4}.

We use that the space $C^k(N,M)$ of smooth maps between closed manifolds $N,M$ is a smooth Banach manifold for $k$ as above \cite[\S11]{A}.  We also need the following:

\begin{thm} \cite[Thm.~16]{palais} Let $V_1, V_2$ be locally convex topological vector spaces 
({\it e.g.,} Banach spaces) with $f:V_1\to V_2$ a continuous linear map with dense image.  Let $O$ be open in $V_2$ and set $\tilde O = f^{-1}(O).$  If $V_1, V_2$ are metrizable (or more generally that $O,\tilde O$ are paracompact), then
$f|_{\tilde O}:\tilde O\to O$ is a homotopy equivalence.
\end{thm}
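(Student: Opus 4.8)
\subsection*{Proof proposal}

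The plan is the standard two-step strategy: first show that $f|_{\tilde O}$ is a weak homotopy equivalence, using only density of $f(V_1)$ and local convexity, and then upgrade to a genuine homotopy equivalence using the fact that $O$ and $\tilde O$ have the homotopy type of CW complexes. For the upgrade: under the stated hypotheses both $O$ and $\tilde O$ are ANRs. In the metrizable case this is because a metrizable locally convex space is an absolute retract (Dugundji's extension theorem) and an open subset of an ANR is an ANR; in the general case $O,\tilde O$ are paracompact and locally convex, hence locally AR, hence ANR by the local characterization of ANRs. A metrizable ANR has the homotopy type of a CW complex (Milnor), so by Whitehead's theorem it then suffices to prove that $f|_{\tilde O}$ induces isomorphisms on all $\pi_n$ and a bijection on $\pi_0$. (In the non-metrizable paracompact case one uses the corresponding statement for these ANRs, as in \cite{palais}; the weak-equivalence argument below is insensitive to this.)

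The core is an approximation lemma: \emph{for every compact $K$, every continuous $g\colon K\to O$, and every open $W$ with $g(K)\subseteq W\subseteq O$, there is a continuous $\tilde g\colon K\to\tilde O$ with $f\circ\tilde g$ homotopic to $g$ through maps into $W$.} To build $\tilde g$, use compactness and local convexity to pick a convex symmetric open $0$-neighborhood $B$ with the property that all the convex combinations appearing below stay inside $W$; cover $K$ by finitely many open sets $O_i$ on each of which $g$ oscillates within $B$; take a partition of unity $\{\rho_i\}$ subordinate to $\{O_i\}$; and for each $i$ use density of $f(V_1)$ to choose $v_i\in V_1$ with $f(v_i)$ within $B$ of $g$ on $O_i$. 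Set $\tilde g(k)=\sum_i\rho_i(k)v_i$. Then $f\tilde g(k)=\sum_i\rho_i(k)f(v_i)$ is a convex combination of points of a small convex neighborhood of $g(k)$ lying in $W$, so $\tilde g$ lands in $f^{-1}(W)\subseteq\tilde O$; and the straight-line homotopy $t\mapsto(1-t)g(k)+t\,f\tilde g(k)$ stays in that same convex set, hence in $W$. Applying this with $K=S^n$ (and with $K$ a point) gives surjectivity of $\pi_n(\tilde O)\to\pi_n(O)$ and of $\pi_0(\tilde O)\to\pi_0(O)$.

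For injectivity one needs a relative version of the lemma: given $\tilde g_0,\tilde g_1\colon K\to\tilde O$ with $f\tilde g_0\simeq f\tilde g_1$ in $O$, run the construction on the (still compact) space $K\times[0,1]$ applied to the connecting homotopy, but carried out \emph{relative to} the closed set $K\times\{0,1\}$, so that the resulting lift restricts to $\tilde g_0$ and $\tilde g_1$ at the two ends. This relative step is the main obstacle, and it is where non-injectivity of $f$ bites: over the ends the homotopy already carries a prescribed lift $\tilde g_j$, which is generally \emph{not} the one the partition-of-unity recipe would produce, so the two must be blended. Concretely, one chooses the partition of unity on $K\times[0,1]$ and the representatives $v_i$ so that near $K\times\{0,1\}$ they reproduce a partition of unity compatible with $\tilde g_j$, and uses an Urysohn function to interpolate between the "prescribed" and the "generic" lift while keeping all intermediate convex combinations inside $W$. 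Granting this, $\tilde g_0\simeq\tilde g_1$ in $\tilde O$, which gives injectivity on every $\pi_n$; the case $(K,A)=(\{0,1\},\emptyset)$ handles $\pi_0$. Combined with the first paragraph, $f|_{\tilde O}$ is a homotopy equivalence. All ingredients except this relative/interpolation step (Dugundji, Milnor, Whitehead, and the nerve-style approximation) are routine.
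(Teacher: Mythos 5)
First, a point of order: the paper does not prove this statement at all --- it is quoted from Palais \cite[Thm.~16]{palais} and used as a black box in the proof of Lem.~\ref{lem:3.1} --- so there is no in-paper argument to compare yours against. Your sketch follows the same route as Palais's original proof (weak homotopy equivalence by convex approximation with partitions of unity, then an upgrade via CW-domination and the Whitehead theorem), so the strategy is the right one, and your surjectivity/approximation lemma is correct as written.

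The genuine gap is the step you flag and then assume (``Granting this\dots''): the relative lift needed for injectivity on $\pi_n$. The mechanism you propose --- choosing the partition of unity and the $v_i$ near $K\times\{0,1\}$ so as to ``reproduce'' the prescribed lifts, then blending with an Urysohn function --- is not carried out, and it misdiagnoses the difficulty: since $\ker f$ may be huge, the prescribed lifts $\tilde g_j$ need not be close in $V_1$ to anything the partition-of-unity recipe can produce, so no choice of $v_i$ will make the generic lift approximate $\tilde g_j$ upstairs. The observation that makes the relative step easy is that $\tilde O=f^{-1}(O)$ is a \emph{full} preimage: a segment $(1-s)u+su'$ in $V_1$ lies in $\tilde O$ if and only if its image $(1-s)f(u)+sf(u')$ lies in $O$. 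So build the lift $\tilde H$ of the connecting homotopy with no boundary conditions at all, and then concatenate, on collars $K\times[0,\varepsilon]$ and $K\times[1-\varepsilon,1]$, the straight-line homotopies from $\tilde g_j$ to $\tilde H_j$; these stay in $\tilde O$ because downstairs they are the straight lines from $f\tilde g_j=H_j$ to $f\tilde H_j$, which your approximation already confines to a convex neighborhood inside $O$ (or $W$). With that substitution the injectivity argument closes, and basepoints are handled by running the same relative argument for pairs $(D^n,S^{n-1})$. A secondary, smaller issue: in the non-metrizable paracompact case, ``locally AR hence ANR'' is not a theorem in that generality (Hanner's local characterization is for metrizable spaces); Palais instead shows directly that such open sets are dominated by CW complexes via the nerve of a numerable convex cover, and that is what you should invoke there.
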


We now prove a corrected version of Lem.~B.1.

\begin{lem}\label{lem:3.1}
    For  $k,k'\in [0,\infty)$ and $N,M$ closed manifolds,  $C^k(N,M)$ and $C^{k'}(N,M)$ 
are homotopy equivalent smooth Banach manifolds.
\end{lem}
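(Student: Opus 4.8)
The plan is to deduce this directly from the quoted Theorem of Palais (\cite[Thm.~16]{palais}), using the standard fact that $C^k$-mapping spaces between closed manifolds are smooth Banach manifolds for $k\in[0,\infty)$ \cite[\S11]{A}. Without loss of generality assume $k'\ge k$. First I would set up the inclusion map. The identity on maps gives a canonical set-theoretic inclusion $\iota: C^{k'}(N,M)\hookrightarrow C^k(N,M)$, which is continuous since the $C^k$-norm is dominated by the $C^{k'}$-norm on each chart; moreover $\iota$ has dense image, because smooth maps (hence $C^\infty\subset C^{k'}$ maps) are dense in $C^k(N,M)$ — this is a standard approximation fact (smoothing by convolution in local charts, patched with a partition of unity, keeping the image in $M$ via a tubular neighborhood/nearest-point projection).

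Next I would pass to the local linear model to apply Palais. Fix an embedding $M\hookrightarrow \BbR^n$ with a tubular neighborhood $U$ and smooth retraction $r:U\to M$. Then $C^k(N,M)$ is an open subset (in fact a smooth submanifold modeled on an open set) of the Banach space $C^k(N,\BbR^n)$ — concretely, $C^k(N,M)$ sits inside the open set $O_k=\{\,g\in C^k(N,\BbR^n): g(N)\subset U\,\}$, and composition with $r$ retracts $O_k$ onto $C^k(N,M)$. The inclusion $C^{k'}(N,\BbR^n)\hookrightarrow C^k(N,\BbR^n)$ is a continuous linear map of Banach spaces with dense image (polynomial or mollified approximation), both spaces are metrizable, and $O_{k'}$ is exactly the preimage of the open set $O_k$. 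Hence by the quoted Theorem, $O_{k'}\to O_k$ is a homotopy equivalence. Composing with the retractions $r_*$ (which are homotopy equivalences onto $C^{k'}(N,M)$ and $C^k(N,M)$ respectively, with the inclusions as homotopy inverses, since $r\circ\,(\text{incl})=\mathrm{id}$ and the straight-line homotopy in $U$ connects $(\text{incl})\circ r$ to $\mathrm{id}$ within $O$), we conclude that $\iota:C^{k'}(N,M)\to C^k(N,M)$ is a homotopy equivalence.

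The main obstacle is not the homotopy-equivalence machinery but the bookkeeping needed to apply Palais's theorem cleanly: one must verify that the relevant function spaces are genuinely metrizable locally convex TVS (true for Banach spaces, so fine here), that the inclusion has \emph{dense} image in the $C^k$-topology (the one genuinely analytic input — density of smooth maps in $C^k$, via mollification in charts and a partition of unity, composed with the retraction $r$ to stay in $M$), and that the manifold structures are compatible so that "open subset of a Banach space" is literally correct and the retraction $r_*$ intertwines the inclusions. Once these are in place the argument is a two-line diagram chase. I would also remark that the same argument fails for Sobolev topologies $H^s$ precisely because the relevant inclusions $H^{s'}\hookrightarrow H^s$ do have dense image, so Palais \emph{would} apply — the real issue flagged in the introduction is subtler (it is about whether $H^s$-mapping spaces are Banach manifolds and whether the group structures match up), so I would keep the statement of Lemma~\ref{lem:3.1} restricted to $C^k$ topologies as written and not claim more.
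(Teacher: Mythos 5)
Your proposal is correct and follows essentially the same route as the paper's proof: embed $M$ in Euclidean space, apply Palais's theorem to the inclusion $C^{k'}(N,\BbR^n)\hookrightarrow C^k(N,\BbR^n)$ restricted to the open sets of maps landing in a tubular neighborhood, and then deformation-retract onto the mapping spaces into $M$. Your version is in fact slightly more careful than the paper's, since you explicitly verify the density of the image (needed for Palais) and track that the retractions intertwine the inclusions.
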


It is plausible that ${\rm Diff}^k(M)$ and ${\rm Diff}^{k'}(M)$  are also homotopy equivalent,
but we don't have a proof for this.

\begin{proof}
Let $M\to \R^\ell$ be an embedding of $M$ into Euclidean space.  Set $V_1 = C^{k'}(N,\R^\ell), V_2 = C^k(N,\R^\ell)$ with their Banach space norm, for $k'>k.$ 
Let $f=i:V_1\to V_2$ be the continuous inclusion.  Let $T$ be an open tubular neighborhood of $M$ in $\R^\ell$.  Set $O = C^k(N,T)$, so $\tilde O = C^{k'}(N,T).$  $O$ is open
in $V_2$, so $C^k(N,T)$ and $ C^{k'}(N,T)$ are homotopy equivalent.  Since $T$ deformation retracts onto $M$, $C^k(M,T)$ deformation retracts onto $C^k(N,M)$, and similarly for $C^{k'}.$  Thus $C^k(N,T)$ is homotopy equivalent to  $C^k(N,M)$.  We conclude that $C^k(N,M)$ and $C^{k'}(N,M)$ are homotopy equivalent.
\end{proof}

\appendix
\section{Proofs of Lemma \ref{lem:2.2a} and Lemma \ref{lem:2.3a}}

\noindent {\it Proof of Lemma \ref{lem:2.2a}.}
 We do the case ${\dim}(M) = 2k-1=5$   to keep the notation down.  Fix  
 $x\in \bm$ and $\xi\in T_x\bm.$ 
 For $X_0, X_1,\ldots, X_5\in T_x\bm,$ set
 \begin{equation}\label{eq:tk}\tilde K(X_0,\ldots, X_5)_x = \sum_{a=0}^5 \partial_{\lambda^a} K_{\nu \lambda_0 \cdots \widehat{\lambda_a}\cdots\lambda_{2k-1}} 
(x)\xi^\nu
X_a^{\lambda_a} X_0^{\lambda_0}\ldots \widehat{X_a^{\lambda_a}} \ldots X_5^{\lambda_5}.
\end{equation}
If we show that the right hand side of (\ref{eq:tk}) is skew-symmetric in $X_0,\ldots, X_5$, then
$\tilde K(X_0,\ldots, X_5)$ is a $6$-form on $\bm$ and hence must vanish.  The lemma follows by replacing $x$ with
$F(x^0,\theta,x)$, $\xi$ with $(d/d\theta)F(x^0,\theta, x)$, and $X_i^{\lambda_i}$ with $\partial F^{\lambda_i}/\partial x^i.$

To check skew-symmetry in $X_0, X_1$, we write
\begin{align}
\MoveEqLeft{\tilde K(X_0,X_1, X_2, X_3, X_4, X_5) }\nonumber\\
&= (\partial_{\lambda_0} K_{\nu \lambda_1\lambda_2\lambda_3\lambda_4\lambda_5}
- \partial_{\lambda_1} K_{\nu \lambda_0 \lambda_2\lambda_3\lambda_4\lambda_5}) \xi^\nu
X_0^{\lambda_0} X_1^{\lambda_1} X_2^{\lambda_2}X_3^{\lambda_3}X_4^{\lambda_4} X_5^{\lambda_5} \label{16}\\
&\qquad + (\partial_{\lambda_2} K_{\nu \lambda_0 \lambda_1 \lambda_3\lambda_4\lambda_5}
-\partial_{\lambda_3} K_{\nu \lambda_0 \lambda_1\lambda_2\lambda_4\lambda_5}
+ \partial_{\lambda_4} K_{\nu \lambda_0 \lambda_1\lambda_2\lambda_3\lambda_5}
-\partial_{\lambda_5} K_{\nu \lambda_0 \lambda_1\lambda_2\lambda_3\lambda_4})\label{17}\\
&\qquad\qquad \cdot \xi^\nu X_0^{\lambda_0} X_1^{\lambda_1} X_2^{\lambda_2}X_3^{\lambda_3}X_4^{\lambda_4} X_5^{\lambda_5},\nonumber\\
\MoveEqLeft{\tilde K(X_1,X_0, X_2, X_3, X_4, X_5) }\nonumber\\
&= (\partial_{\lambda_1} K_{\nu \lambda_0\lambda_2\lambda_3\lambda_4\lambda_5}
- \partial_{\lambda_0} K_{\nu \lambda_1 \lambda_2\lambda_2\lambda_4\lambda_5}) \xi^\nu
X_1^{\lambda_1} X_0^{\lambda_0} X_2^{\lambda_2}X_3^{\lambda_3}X_4^{\lambda_4} X_5^{\lambda_5}  \label{18}\\
&\qquad + (\partial_{\lambda_2} K_{\nu \lambda_1 \lambda_0 \lambda_3\lambda_4\lambda_5}
-\partial_{\lambda_3} K_{\nu \lambda_1 \lambda_0\lambda_2\lambda_4\lambda_5}
+ \partial_{\lambda_4} K_{\nu \lambda_1 \lambda_0\lambda_2\lambda_3\lambda_5}
-\partial_{\lambda_5} K_{\nu \lambda_1 \lambda_0\lambda_2\lambda_3\lambda_4})   \label{19}\\
&\qquad\qquad \cdot \xi^\nu X_1^{\lambda_1} X_0^{\lambda_0} X_2^{\lambda_2}X_3^{\lambda_3}X_4^{\lambda_4} X_5^{\lambda_5}.\nonumber
\end{align}
Then $\text{(\ref{16})} = - \text{(\ref{18})}$ by inspection, and $\text{(\ref{17})} = - \text{(\ref{19})}$, because  $K$ is skew-symmetric in $\lambda_1,\ldots,\lambda_5$ by (\ref{K}).

We now check skew-symmetry in $X_1, X_2$, with all other cases being similar.  We have
\begin{align}
\MoveEqLeft{\tilde K(X_0,X_2, X_1, X_3, X_4, X_5) }\nonumber\\
&= \partial_{\lambda_0} K_{\nu \lambda_2\lambda_1\lambda_3\lambda_4\lambda_5}
\xi^\nu X_1^{\lambda_1} X_0^{\lambda_0} X_2^{\lambda_2}X_3^{\lambda_3}X_4^{\lambda_4} X_5^{\lambda_5}\label{20}\\
&\qquad +( - \partial_{\lambda_2} K_{\nu \lambda_0\lambda_1\lambda_3\lambda_4\lambda_5} 
+ \partial_{\lambda_1} K_{\nu \lambda_0\lambda_2\lambda_3\lambda_4\lambda_5} )
\xi^\nu X_1^{\lambda_1} X_0^{\lambda_0} X_2^{\lambda_2}X_3^{\lambda_3}X_4^{\lambda_4} X_5^{\lambda_5}\label{21}\\
&\qquad +(- \partial_{\lambda_3} K_{\nu \lambda_0\lambda_2\lambda_1\lambda_4\lambda_5} 
+ \partial_{\lambda_4} K_{\nu \lambda_0\lambda_2\lambda_1\lambda_3\lambda_5} 
+ \partial_{\lambda_5} K_{\nu \lambda_0\lambda_2\lambda_1\lambda_3\lambda_4}) \label{22}\\
&\qquad \qquad \cdot\xi^\nu  X_0^{\lambda_0} X_2^{\lambda_2}X_1^{\lambda_1}X_3^{\lambda_3}X_4^{\lambda_4} X_5^{\lambda_5}.
\nonumber
\end{align}  
Then $\tilde K(X_0,X_2, X_1, X_3, X_4, X_5) = - \tilde K(X_0,X_1, X_2, X_3, X_4, X_5) $, because 
(i) the skew-symmetry of $K$ implies the skew-symmetry of (\ref{20}) and (\ref{22}) in $\lambda_1,\lambda_2$; (ii)
(\ref{21}) is explicitly skew-symmetric in $\lambda_1,\lambda_2$.

The general case is similar.
\hfill $\Box$
\bigskip

\noindent {\it Proof of Lemma \ref{lem:2.3a}.}  We again do the case ${\rm dim}(M) = 5,$ with the general case being similar.  The terms with second partial derivatives are
\begin{align}
&K_{\nu\lambda_1\lambda_2\lambda_3\lambda_4\lambda_5}\left(\alpha^1 \frac{\partial^2F^\nu}{\partial x^1\partial\theta}+ \alpha^2 \frac{\partial^2F^\nu}{\partial x^2\partial\theta}
+\alpha^3 \frac{\partial^2F^\nu}{\partial x^3\partial\theta}
+ \alpha^4 \frac{\partial^2F^\nu}{\partial x^4\partial\theta}
+\alpha^5 \frac{\partial^2F^\nu}{\partial x^5\partial\theta}
\right) \label{23}\\
&\qquad \qquad \cdot \frac{\partial F^{\lambda_1}}{\partial x^1}\frac{\partial F^{\lambda_2}}{\partial x^2}
\frac{\partial F^{\lambda_3}}{\partial x^3}\frac{\partial F^{\lambda_4}}{\partial x^4}
\frac{\partial F^{\lambda_5}}{\partial x^5}\nonumber\\
&-K_{\nu\lambda_0\lambda_2\lambda_3\lambda_4\lambda_5}\frac{\partial^2F^\nu}{\partial x^1\partial\theta}
\left( \alpha^i\frac{\partial F^{\lambda_0}}{\partial x^i}\right)
\frac{\partial F^{\lambda_2}}{\partial x^2}
\frac{\partial F^{\lambda_3}}{\partial x^3}\frac{\partial F^{\lambda_4}}{\partial x^4}
\frac{\partial F^{\lambda_5}}{\partial x^5}
\label{24}\\
&+ K_{\nu\lambda_0\lambda_1\lambda_3\lambda_4\lambda_5}\frac{\partial^2F^\nu}{\partial x^2\partial\theta}
\left( \alpha^i\frac{\partial F^{\lambda_0}}{\partial x^i}\right)
\frac{\partial F^{\lambda_1}}{\partial x^1}\frac{\partial F^{\lambda_3}}{\partial x^3}
\frac{\partial F^{\lambda_4}}{\partial x^4}\frac{\partial F^{\lambda_5}}{\partial x^5}
\label{25}\\
&- K_{\nu\lambda_0\lambda_1\lambda_2\lambda_4\lambda_5}\frac{\partial^2F^\nu}{\partial x^3\partial\theta}
\left( \alpha^i\frac{\partial F^{\lambda_0}}{\partial x^i}\right)
\frac{\partial F^{\lambda_1}}{\partial x^1}\frac{\partial F^{\lambda_2}}{\partial x^2}
\frac{\partial F^{\lambda_4}}{\partial x^4}\frac{\partial F^{\lambda_5}}{\partial x^5}
\label{26}\\
&+ K_{\nu\lambda_0\lambda_1\lambda_2\lambda_3\lambda_5}\frac{\partial^2F^\nu}{\partial x^4\partial\theta}
\left( \alpha^i\frac{\partial F^{\lambda_0}}{\partial x^i}\right)
\frac{\partial F^{\lambda_1}}{\partial x^1}\frac{\partial F^{\lambda_2}}{\partial x^2}
\frac{\partial F^{\lambda_3}}{\partial x^3}\frac{\partial F^{\lambda_5}}{\partial x^5}
\label{27}\\
&- K_{\nu\lambda_0\lambda_1\lambda_2\lambda_3\lambda_4}\frac{\partial^2F^\nu}{\partial x^5\partial\theta}
\left( \alpha^i\frac{\partial F^{\lambda_0}}{\partial x^i}\right)
\frac{\partial F^{\lambda_1}}{\partial x^1}\frac{\partial F^{\lambda_2}}{\partial x^2}
\frac{\partial F^{\lambda_3}}{\partial x^3}\frac{\partial F^{\lambda_4}}{\partial x^4}.
\label{28}
\end{align}
In (\ref{24}), in the term $\alpha^i (\partial F^{\lambda_0}/\partial x^i)$, only the term $\alpha^1 (\partial F^{\lambda_0}/\partial x^1)$ is nonzero: for example, the term 
$$K_{\nu\lambda_0\lambda_2\lambda_3\lambda_4\lambda_5}\frac{\partial^2F^\nu}{\partial x^1\partial\theta}
\left( \alpha^2\frac{\partial F^{\lambda_0}}{\partial x^2}\right) \frac{\partial F^{\lambda_2}}{\partial x^2}
\frac{\partial F^{\lambda_3}}{\partial x^3}\frac{\partial F^{\lambda_4}}{\partial x^4}
\frac{\partial F^{\lambda_5}}{\partial x^5}$$
is skew-symmetric in $\lambda_0, \lambda_2$, and so vanishes. For the same reasons, the terms with 
$\partial F^{\lambda_0}/\partial x^3,$   $ \partial F^{\lambda_0}/\partial x^4,$   $ \partial F^{\lambda_0}/\partial x^5$ vanish.
  Similarly, in (\ref{25}) only $\alpha^2 (\partial F^{\lambda_0}/\partial x^2)$ is nonzero, in (\ref{26}) only $\alpha^3 (\partial F^{\lambda_0}/\partial x^3)$ is nonzero, 
in (\ref{27}) only $\alpha^4 (\partial F^{\lambda_0}/\partial x^4)$ is nonzero, and
in (\ref{28}) only $\alpha^5 (\partial F^{\lambda_0}/\partial x^5)$ is nonzero.

Thus (\ref{23}) -- (\ref{28}) becomes
\begin{align}
&K_{\nu\lambda_1\lambda_2\lambda_3\lambda_4\lambda_5}\left(\alpha^1 \frac{\partial^2F^\nu}{\partial x^1\partial\theta}+ \alpha^2 \frac{\partial^2F^\nu}{\partial x^2\partial\theta}
+\alpha^3 \frac{\partial^2F^\nu}{\partial x^3\partial\theta}
+ \alpha^4 \frac{\partial^2F^\nu}{\partial x^4\partial\theta}
+\alpha^5 \frac{\partial^2F^\nu}{\partial x^5\partial\theta}
\right) \label{29}\\
&\qquad \qquad \cdot \frac{\partial F^{\lambda_1}}{\partial x^1}\frac{\partial F^{\lambda_2}}{\partial x^2}
\frac{\partial F^{\lambda_3}}{\partial x^3}\frac{\partial F^{\lambda_4}}{\partial x^4}
\frac{\partial F^{\lambda_5}}{\partial x^5}\nonumber\\
&-K_{\nu\lambda_0\lambda_2\lambda_3\lambda_4\lambda_5}\frac{\partial^2F^\nu}{\partial x^1\partial\theta}
\left( \alpha^1\frac{\partial F^{\lambda_0}}{\partial x^1}\right)
\frac{\partial F^{\lambda_2}}{\partial x^2}
\frac{\partial F^{\lambda_3}}{\partial x^3}\frac{\partial F^{\lambda_4}}{\partial x^4}
\frac{\partial F^{\lambda_5}}{\partial x^5}
\label{30}\\
&+ K_{\nu\lambda_0\lambda_1\lambda_3\lambda_4\lambda_5}\frac{\partial^2F^\nu}{\partial x^2\partial\theta}
\left( \alpha^2\frac{\partial F^{\lambda_0}}{\partial x^2}\right)
\frac{\partial F^{\lambda_1}}{\partial x^1}\frac{\partial F^{\lambda_3}}{\partial x^3}
\frac{\partial F^{\lambda_4}}{\partial x^4}\frac{\partial F^{\lambda_5}}{\partial x^5}
\label{31}\\
&- K_{\nu\lambda_0\lambda_1\lambda_2\lambda_4\lambda_5}\frac{\partial^2F^\nu}{\partial x^3\partial\theta}
\left( \alpha^3\frac{\partial F^{\lambda_0}}{\partial x^3}\right)
\frac{\partial F^{\lambda_1}}{\partial x^1}\frac{\partial F^{\lambda_2}}{\partial x^2}
\frac{\partial F^{\lambda_4}}{\partial x^4}\frac{\partial F^{\lambda_5}}{\partial x^5}
\label{32}\\
&+ K_{\nu\lambda_0\lambda_1\lambda_2\lambda_3\lambda_5}\frac{\partial^2F^\nu}{\partial x^4\partial\theta}
\left( \alpha^4\frac{\partial F^{\lambda_0}}{\partial x^4}\right)
\frac{\partial F^{\lambda_1}}{\partial x^1}\frac{\partial F^{\lambda_2}}{\partial x^2}
\frac{\partial F^{\lambda_3}}{\partial x^3}\frac{\partial F^{\lambda_5}}{\partial x^5}
\label{33}\\
&- K_{\nu\lambda_0\lambda_1\lambda_2\lambda_3\lambda_4}\frac{\partial^2F^\nu}{\partial x^5\partial\theta}
\left( \alpha^5\frac{\partial F^{\lambda_0}}{\partial x^5}\right)
\frac{\partial F^{\lambda_1}}{\partial x^1}\frac{\partial F^{\lambda_2}}{\partial x^2}
\frac{\partial F^{\lambda_3}}{\partial x^3}\frac{\partial F^{\lambda_4}}{\partial x^4}.
\label{34}
\end{align}

If we replace $\lambda_0$ in (\ref{30}) with $\lambda_1$, then the term 
$$K_{\nu\lambda_1\lambda_2\lambda_3\lambda_4\lambda_5}\left(\alpha^1 \frac{\partial^2F^\nu}{\partial x^1\partial\theta}\right)\frac{\partial F^{\lambda_1}}{\partial x^1}\frac{\partial F^{\lambda_2}}{\partial x^2}
\frac{\partial F^{\lambda_3}}{\partial x^3}\frac{\partial F^{\lambda_4}}{\partial x^4}
\frac{\partial F^{\lambda_5}}{\partial x^5}$$
in (\ref{29}) cancels with (\ref{30}).
If we replace $\lambda_0$ in (\ref{31}) with $\lambda_2$, then the term 
$$K_{\nu\lambda_1\lambda_2\lambda_3\lambda_4\lambda_5}\left(\alpha^2 \frac{\partial^2F^\nu}{\partial x^2\partial\theta}\right)\frac{\partial F^{\lambda_1}}{\partial x^1}\frac{\partial F^{\lambda_2}}{\partial x^2}
\frac{\partial F^{\lambda_3}}{\partial x^3}\frac{\partial F^{\lambda_4}}{\partial x^4}
\frac{\partial F^{\lambda_5}}{\partial x^5}$$
in (\ref{29}) cancels with (\ref{31}).
If we replace $\lambda_0$ in (\ref{32}) with $\lambda_3$, then the term 
$$K_{\nu\lambda_1\lambda_2\lambda_3\lambda_4\lambda_5}\left(\alpha^3 \frac{\partial^2F^\nu}{\partial x^3\partial\theta}\right)\frac{\partial F^{\lambda_1}}{\partial x^1}\frac{\partial F^{\lambda_2}}{\partial x^2}
\frac{\partial F^{\lambda_3}}{\partial x^3}\frac{\partial F^{\lambda_4}}{\partial x^4}
\frac{\partial F^{\lambda_5}}{\partial x^5}$$
in (\ref{29}) cancels with (\ref{32}).
If we replace $\lambda_0$ in (\ref{33}) with $\lambda_4$, then the term 
$$K_{\nu\lambda_1\lambda_2\lambda_3\lambda_4\lambda_5}\left(\alpha^4 \frac{\partial^2F^\nu}{\partial x^4\partial\theta}\right)\frac{\partial F^{\lambda_1}}{\partial x^1}\frac{\partial F^{\lambda_2}}{\partial x^2}
\frac{\partial F^{\lambda_3}}{\partial x^3}\frac{\partial F^{\lambda_4}}{\partial x^4}
\frac{\partial F^{\lambda_5}}{\partial x^5}$$
in (\ref{29}) cancels with (\ref{33}).
If we replace $\lambda_0$ in (\ref{34}) with $\lambda_5$, then the term 
$$K_{\nu\lambda_1\lambda_2\lambda_3\lambda_4\lambda_5}\left(\alpha^5 \frac{\partial^2F^\nu}{\partial x^5\partial\theta}\right)\frac{\partial F^{\lambda_1}}{\partial x^1}\frac{\partial F^{\lambda_2}}{\partial x^2}
\frac{\partial F^{\lambda_3}}{\partial x^3}\frac{\partial F^{\lambda_4}}{\partial x^4}
\frac{\partial F^{\lambda_5}}{\partial x^5}$$
in (\ref{29}) cancels with (\ref{34}).

Thus (29) -- (34) sum to zero, which proves the Lemma.  \hfill $\Box$

 \bibliographystyle{amsplain}
\bibliography{GeometryofLoopSpacesCorrections}

\end{document}